\documentclass[11pt]{article}

\usepackage{amsmath}
\usepackage{amssymb}
\usepackage{amsthm}
\usepackage{multirow}
\usepackage{color}
\usepackage{longtable}
\usepackage{array}
\usepackage{url}
\usepackage{tikz}

\oddsidemargin  0pt \evensidemargin 0pt \marginparwidth 40pt
\marginparsep 10pt \topmargin -10pt \headsep 10pt \textheight
8.7in \textwidth 6.7in \textheight 7.8375in

\newtheorem{theorem}{Theorem}[section]

\newtheorem{lemma}[theorem]{Lemma}
\newtheorem{example}[theorem]{Example}

\newtheorem{corollary}[theorem]{Corollary}

\newtheorem{remark}{Remark}[section]

% The following commands are frequently used.

%absolute value

\title{A Generalization of Combinatorial Designs Related to Almost Difference Sets}

\author{
Jerod Michel\thanks{Corresponding author. Email Address: contextolibre@gmail.com.}
\thanks{J. Michel is with the Department of Mathematics, Zhejiang University, Hangzhou 310027, China.}
, Baokun Ding\thanks{B. Ding is with the Department of Mathematics, Zhejiang University, Hangzhou 310027, China.} \\
}

\begin{document}

\date{}\maketitle

\begin{abstract}
In this paper we study a certain generalization of combinatorial designs related to almost difference sets, namely the $t$-adesign, which was coined by Cunsheng Ding in 2015. It is clear that $2$-adesigns are a kind of partially balanced incomplete block design which naturally arise in many combinatorial and statistical problems. We discuss some of their basic properties and give several constructions of $2$-adesigns (some of which correspond to new almost difference sets, and others of which correspond to new almost difference families), as well as two constructions of $3$-adesigns.  We also discuss some basic properties of their incidence matrices and codes.

\medskip
\noindent {{\it Key words and phrases\/}:
almost difference set, $t$-design, cyclotomic coset, linear code, $t$-adesign
}\\
\smallskip

\noindent {{\it Mathematics subject classifications\/}: 05B05, 05B10, 11T22, 51E30, 05B30, 94C30.}
\end{abstract}

%\begin{keywords}
%Difference sets, almost difference sets, cyclotomic cosets, combinatorial designs.
%\end{keywords}

\section{Introduction}\label{sec1}
Combinatorial designs have extensive applications in many fields, including finite geometry \cite{DEM}, \cite{HIR}, design of experiments \cite{FISH}, \cite{BOS}, cryptography \cite{CDR}, \cite{STIN}, and authentication codes and secret sharing schemes \cite{OKS}, \cite{STIN}.
We will assume some familiarity with combinatorial design theory. A $t$-$(v,k,\lambda)$ {\it design} (with $v>k>t>0$) is an incidence structure $(V,\mathcal{B})$ where $V$ is a set of $v$ points and $\mathcal{B}$ is a collection of $k$-subsets of $V$ (called blocks), such that any $t$-subset of $V$ is contained in exactly $\lambda$ blocks \cite{STIN}. When $t=2$, a $t$-design is sometimes referred to as a balanced incomplete block design. Denoting the number of blocks by $b$ and the number of blocks containing a given point by $r$, the identities \[
bk=vr \] and \[ r(k-1)=(v-1)\lambda \] restrict the possible parameter sets. A $t$-$(v,k,\lambda)$ design in which $b=v$ and $r=k$ is called {\it symmetric}, and any two blocks meet in $\lambda$ points. A $t$-$(v,k,\lambda)$ design is called {\it quasi-symmetric} if two blocks meet in $s_{1}$ points or in $s_{2}$ points (where $s_{1}<s_{2}$). The {\it dual} $(V,\mathcal{B})^{\perp}$ of an incidence structure $(V,\mathcal{B})$ is the incidence structure $(\mathcal{B},V)$ with the roles of points and blocks interchanged. A symmetric incidence structure is always isomorphic to its dual. The Bruck-Ryser-Chowla theorem provides necessary conditions for the existence of symmetric $2$-$(v,k,\lambda)$ designs.

\begin{theorem} (\cite{STIN}, \cite{LAN}) Suppose there exists a symmetric $2$-$(v,k,\lambda)$ design. Set $n=k-\lambda$. Then \begin{enumerate} \item if $v$ is even then $n$ is a square,
\item if $v$ is odd then the equation \[ z^{2}=nx^{2}+(-1)^{(v-1)/2}\lambda y^{2}\] has a solution in integers $x,y,z$ not all zero. \end{enumerate}
\end{theorem}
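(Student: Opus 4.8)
The plan is to work with the $v\times v$ incidence matrix $A$ of the symmetric design, with rows indexed by points and columns by blocks. Since every pair of distinct points lies in exactly $\lambda$ blocks and every point lies in $r=k$ blocks (symmetry forces $r=k$), the matrix $A$ satisfies
\[ AA^{T}=(k-\lambda)I+\lambda J=nI+\lambda J, \]
where $J$ is the all-ones matrix. This identity is the only design-theoretic input; everything afterward is linear algebra and number theory.

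For part (1) I would take determinants. The eigenvalues of $J$ are $v$ (once) and $0$ (with multiplicity $v-1$), so $nI+\lambda J$ has eigenvalues $n+\lambda v$ and $n$ (with multiplicity $v-1$). Using the stated identity $r(k-1)=(v-1)\lambda$ with $r=k$ gives $\lambda(v-1)=k^{2}-k$, whence $n+\lambda v=k^{2}$, and therefore
\[ (\det A)^{2}=\det(AA^{T})=k^{2}\,n^{\,v-1}. \]
Since $A$ has integer entries, $\det A\in\mathbb{Z}$, so $k^{2}n^{v-1}$ is a perfect square; as $k^{2}$ already is, $n^{v-1}$ must be a perfect square. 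When $v$ is even, $v-1$ is odd, forcing $n$ itself to be a square. This part is routine.

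The substance is part (2). The congruence $AA^{T}=A\cdot I\cdot A^{T}$ shows that over $\mathbb{Q}$ the quadratic form $\mathbf{x}^{T}(nI+\lambda J)\mathbf{x}=n\sum_{i=1}^{v}x_{i}^{2}+\lambda\big(\sum_{i=1}^{v}x_{i}\big)^{2}$ is equivalent to $\sum_{i=1}^{v}y_{i}^{2}$, where $\mathbf{y}=A^{T}\mathbf{x}$ has integer coordinates; the equivalence is genuine because $\det A\neq 0$. I would then invoke Lagrange's four-square theorem to write $n=a_{1}^{2}+a_{2}^{2}+a_{3}^{2}+a_{4}^{2}$ and use the associated $4\times 4$ integer matrix $B$ with $BB^{T}=nI_{4}$ (quaternion multiplication) to convert each block of four $x$-variables: the piece $n(x_{4j+1}^{2}+\cdots+x_{4j+4}^{2})$ becomes a sum of four new squares via an invertible rational substitution. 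When $v\equiv 1\pmod 4$ I group $x_{1},\dots,x_{v-1}$ into $(v-1)/4$ such blocks and leave $x_{v}$ aside; when $v\equiv 3\pmod 4$ I first adjoin a fresh variable $x_{v+1}$ (adding $nx_{v+1}^{2}$ to both sides) so that $v+1\equiv 0\pmod 4$ and all variables can be blocked.

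In either case the identity reduces to a sum of squares on each side with a few leftover terms, and I would impose the constraints $y_{i}=\pm z_{i}$ one at a time, each solved for a single free variable (choosing the sign so the relevant coefficient is nonzero, and treating the degenerate coefficient-zero case separately). After cancelling all matched squares, what survives is a three-term rational relation $z^{2}=nx^{2}+(-1)^{(v-1)/2}\lambda y^{2}$, the sign being $+1$ for $v\equiv 1$ and $-1$ for $v\equiv 3\pmod 4$, with at least one free variable remaining, which guarantees a nontrivial rational, hence (after clearing denominators) integer, solution. The main obstacle is exactly this bookkeeping in part (2): verifying that the chain of sign choices and eliminations can always be carried out and that the surviving solution is not the all-zero one. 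A cleaner but heavier alternative is to phrase all of part (2) as an application of the Hasse--Minkowski theorem to the rational equivalence $nI+\lambda J\sim I$, which avoids the explicit substitutions at the cost of citing the full local-global theory of quadratic forms.
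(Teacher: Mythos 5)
The paper does not prove this statement: it is the classical Bruck--Ryser--Chowla theorem, quoted in the introduction with citations to \cite{STIN} and \cite{LAN}, so there is no internal proof to compare yours against. What you propose is exactly the standard argument from those sources, and it is sound in outline. Part (1) is complete as written: $AA^{T}=nI+\lambda J$, the eigenvalue computation giving $\det(AA^{T})=(n+\lambda v)n^{v-1}=k^{2}n^{v-1}$, and the parity of $v-1$ force $n$ to be a square (you implicitly use $n\neq 0$, which holds since $\lambda(v-1)=k(k-1)$ and $v>k$ give $\lambda<k$). For part (2) the only places demanding care are the ones you already flag: in the step-by-step elimination, for each $i$ the coefficient of $z_{i}$ in the linear form $y_{i}$ cannot simultaneously make both $y_{i}-z_{i}$ and $y_{i}+z_{i}$ degenerate in $z_{i}$ (the two choices shift that coefficient by $\mp 1$), so one of the two sign choices always lets you solve for $z_{i}$ in terms of the later variables; and nontriviality of the final relation comes from the single surviving free variable, which you set to a nonzero integer clearing all denominators. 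With those two points made explicit, the argument is the complete classical proof; the Hasse--Minkowski reformulation you mention is a legitimate but heavier alternative that the cited references do not need.
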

\par
Difference sets \cite{HALL} and almost difference sets \cite{NOW} also have applications in many areas such as digital communications \cite{YZ}, \cite{DHM}, sequence design \cite{TANG}, \cite{NAM},  and CDMA and cryptography \cite{CDR}. We will also assume familiarity  with some facts about difference sets and almost difference sets. Let $G$ be a finite additive group with identity 0. Let $k$ and $\lambda$ be positive integers such that $2\leq k < v$. A $(v,k,\lambda)$ {\it difference set} in $G$ is a subset $D \subseteq G$ that satisfies the following properties: \begin{enumerate} \item $|D|=k$,
\item the multiset $\{x-y \mid x,y \in D, x \neq y \}$ contains every member of $G-\{0\}$ exactly $\lambda$ times.\end{enumerate} Almost difference sets are a generalization of difference sets. A $(v,k,\lambda,t)$ {\it almost difference set} in $G$ is a subset $D \subseteq G$ that satisfies the following properties: \begin{enumerate} \item $|D|=k$,
    \item the multiset $\{x-y \mid x,y \in D, x \neq y\}$ contains $t$ members of $G-\{0\}$ which appear $\lambda$ times and $v-1-t$ members of $G-\{0\}$ which appear $\lambda+1$ times. \end{enumerate}
Due to their having extensive applications, it is worthwhile to study the combinatorial objects arising from almost difference sets and almost difference families. In Section \ref{sec2} we introduce the generalizations and discuss some basic properties. In Section \ref{sec3} we give three constructions of $2$-adesigns from quadratic residues, in Section \ref{sec4} we give several constructions of $2$-adesigns which are almost difference families, in Section \ref{sec4.5} we give several constructions of $2$-adesigns from symmetric $t$-designs, in Section \ref{sec5} we give two constructions of $3$-adesigns, and in Section \ref{sec5.5} we discuss the codes of $t$-adesigns and related structures. Section \ref{sec6} concludes the paper.

\section{Preliminaries}\label{sec2}

Let $G$ be an additive group of order $v$. A $k$-element subset $D \subseteq G$ has {\it difference levels} $\mu_{1} < \cdots < \mu_{s}$ if there exist integers $t_{1},...,t_{s}$ such that the multiset \[ M=\{g-h \mid g,h \in D\} \] contains exactly $t_{i}$ members of $G-\{0\}$ each with multiplicity $\mu_{i}$ for all $i$, $1 \leq i \leq s$. We will denote the $t_{i}$ members of the multiset $M$ with multiplicity $\mu_{i}$ by $T_{i}$. Note that the $T_{i}$'s form a partition of $G-\{0\}$. It is easy to see that in the case where $s=1$, $D$ is a difference set \cite{HALL}, and in the case where $s=2$ and $\mu_{2}=\mu_{1}+1$, $D$ is an almost difference set \cite{NOW}. For the remainder of this correspondence all groups are assumed to be additive. The basic equation describing a $k$-element subset $D \subseteq G$ with difference levels $\mu_{1}<\cdots<\mu_{s}$ is given by \begin{equation}\label{eq1}
\mu_{1}t_{1}+\cdots+\mu_{s-1}t_{s-1}+\mu_{s}(v-1-\sum_{i=1}^{s-1}t_{i})=k(k-1) \end{equation}
\par
Let $V$ be a $v$-set and $\mathcal{B}$ a collection of subsets of $V$, called blocks, each having cardinality $k$. If there are positive integers $\mu_{1} < \cdots < \mu_{s}$ such that every subset of $V$ of cardinality $t$ is incident with exactly $\mu_{i}$ blocks for some $i$, $1 \leq i \leq s$, and for each $i$, $1 \leq i \leq s$ there exists a subset of $V$ of cardinality $t$ that is incident with exactly $\mu_{i}$ blocks, then we say that the incidence structure $(V,\mathcal{B})$ has {\it t-levels} $\mu_{1} < \cdots < \mu_{s}$. We denote $|\mathcal{B}|$ by $b$. An incidence structure $(V,\mathcal{B})$ is called {\it symmetric} if $b=v$. In the case where $s=2$, $(V,\mathcal{B})$ is a partially balanced incomplete block design, and if $\mu_{2}=\mu_{1}+1$, we call $(V,\mathcal{B})$ a $t$-$(v,k,\mu_{1})$ {\it adesign} (or simply a $t$-{\it adesign}), which was coined by Ding in \cite{CUN}. It is easy to see that in the case where $s=1$, $(V,\mathcal{B})$ is simply a $t$-design \cite{STIN}.

We call the set $\{D+g \mid g \in G\}$ of translates of $D$, denoted by Dev$D$, the {\it development} of $D$.

\begin{theorem}\label{th1} Let $G$ be an additive group of order $v$. Let $D \subseteq G$ be a $k$-subset with difference levels $\mu_{1} < \cdots < \mu_{s}$. Then the incidence structure $(G,$ Dev$D)$ is symmetric with $2$-levels $\mu_{1} < \cdots < \mu_{s}$.
\end{theorem}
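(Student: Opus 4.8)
The plan is to verify the two assertions separately: first that $b=v$, giving symmetry, and then that the incidence structure has $2$-levels $\mu_{1}<\cdots<\mu_{s}$. The symmetry is immediate from the definitions given above: the development $\mathrm{Dev}\,D=\{D+g \mid g\in G\}$ is a family of blocks indexed by the $v$ elements of $G$, so $b=|\mathrm{Dev}\,D|=v=|G|$, which is precisely the stated condition for $(G,\mathrm{Dev}\,D)$ to be symmetric.

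For the $2$-levels the key step is to compute, for an arbitrary pair $\{x,y\}\subseteq G$ with $x\neq y$, the number of blocks $D+g$ containing both $x$ and $y$. I would observe that $x,y\in D+g$ holds precisely when $x-g\in D$ and $y-g\in D$. Writing $a=x-g$ and $b=y-g$, these constraints become $a,b\in D$ with $a-b=x-y$, and the group element is recovered uniquely as $g=x-a$ (one checks $y-g=b$ automatically). This establishes a bijection between the set of $g\in G$ with $\{x,y\}\subseteq D+g$ and the set of ordered pairs $(a,b)\in D\times D$ satisfying $a-b=x-y$.

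The number of such ordered pairs is, by definition, exactly the multiplicity of $x-y$ in the difference multiset $M=\{g-h \mid g,h\in D\}$. Since $x\neq y$, the element $x-y$ lies in $G-\{0\}$, hence in exactly one part $T_{i}$ of the partition of $G-\{0\}$ determined by the difference levels, so its multiplicity is $\mu_{i}$. Thus every $2$-subset of $G$ is incident with exactly $\mu_{i}$ blocks for some $i$, which is the first half of the definition of $2$-levels. For the second half, that each level is attained, I would use that every $T_{i}$ is nonempty by hypothesis: choosing $d\in T_{i}$ and any $x\in G$, the pair $\{x,x-d\}$ has difference $d\in T_{i}$ and therefore lies in exactly $\mu_{i}$ blocks. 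Here I would also note, for the unordered bookkeeping, that in the abelian group $G$ the multiplicities of $d$ and $-d$ in $M$ coincide, since $a-b=d \iff b-a=-d$, so the count through $\{x,y\}$ is well defined independently of the order of $x$ and $y$.

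I do not anticipate a genuine obstacle; the argument is a direct translation between the difference-multiset description of $D$ and the block-incidence description of $\mathrm{Dev}\,D$. The only point requiring care is the bijection of the second paragraph and the accompanying observation that the count of blocks through $\{x,y\}$ equals the multiplicity of $x-y$ in $M$, together with confirming via the nonemptiness of each $T_{i}$ that all $s$ levels genuinely occur.
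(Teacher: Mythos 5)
Your proof is correct and follows essentially the same route as the paper's: both count the translates $D+g$ containing $\{x,y\}$ by matching them with ordered pairs $(a,b)\in D\times D$ satisfying $a-b=x-y$. The only difference is presentational — you package this as a single explicit bijection $g\mapsto(x-g,y-g)$, whereas the paper proves the two inequalities ($\geq\mu_{j'}$ and $\leq\mu_{j'}$) separately; your version is the cleaner write-up of the same idea.
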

\begin{proof} Assume the conditions for $D \subseteq G$. Suppose that $x,y \in G$ and $x \neq y$. We first show that the number of members $g \in G$ such that $\{x,y\} \subseteq D+g$ is $\mu_{j}$ for some $j$, $1 \leq j \leq s$. Let $d=x-y$. Since $D$ is a $(v,k,s)$ multi-difference set with levels $\mu_{1} < \cdots < \mu_{s}$, the number of ordered pairs $(x',y')$ such that $x',y' \in D$ and $x'-y'=d$ can be found among the $\mu_{i}$'s. Denote these by $(x_{i}^{j},y_{i}^{j})$, $1 \leq i \leq \mu_{j}$, $1 \leq j \leq s$, if $x_{i}^{j}-y_{i}^{j}=d$ is among the $t_{j}$ members of $G-\{0\}$, which we may denote by $T_{j}$, that appear as a difference of members of $D$ exactly $\mu_{j}$ times. Now suppose $d \in T_{j'}$ for fixed $j'$, $1 \leq j' \leq s$. Then for $1 \leq i \leq \mu_{j}$, let $g_{i}=-x_{i}^{j}+x$. Then $g_{i}^{j'}=-y_{i}^{j'}+y$ and $\{x,y\}=\{x_{i}^{j'}+g_{i},y_{i}^{j'}+g_{i}\} \subseteq D+g_{i}$. Note that the $g_{i}$'s are distinct because the $x_{i}^{j'}$'s are distinct. Thus, there are at least $\mu_{j'}$ members $g \in G$ such that $\{x,y\} \subseteq D+g$.
\par
Conversely, suppose that there are exactly $l$ members $h \in G$ such that $\{x,y\} \subseteq D+h$, namely $h_{1},...,h_{l}$. Notice that $(x-h_{i})+(h_{i}-y)=x-y=d$ and $\{x-h_{i},y-h_{i}\} \subseteq D$ for each $i$, $1 \leq i \leq l$. The $h_{i}$'s are distinct, so there are $l$ ordered pairs $(x',y')$ such that $x',y' \in D$ and $x'-y'=d$. The $T_{i}$'s form a partition of $G-\{0\}$ and so $d$ must be a member of one of them. Suppose $d \in T_{j'}$. Then $l \leq \mu_{j'}$ and this gives the result.
\end{proof}

We also have the following corollary whose proof is a trivial consequence of Theorem \ref{th1}.

\begin{corollary} Let $D$ be a $(v,k,\lambda)$ almost difference set in an Abelian group $G$. Then $(G,$ Dev$D)$ is a $2$-$(v,k,\lambda)$ adesign.
\end{corollary}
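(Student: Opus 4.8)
The plan is to deduce this corollary directly from Theorem~\ref{th1} by checking that an almost difference set is precisely a $k$-subset whose difference levels are $\mu_1 < \mu_2$ with $\mu_2 = \mu_1 + 1$, and then invoking the definition of a $2$-adesign. First I would unpack the hypothesis: by definition a $(v,k,\lambda)$ almost difference set $D$ in $G$ has the property that the multiset $\{x-y \mid x,y \in D,\ x \neq y\}$ contains $t$ nonzero elements with multiplicity $\lambda$ and $v-1-t$ nonzero elements with multiplicity $\lambda+1$. Comparing this with the notion of difference levels introduced in Section~\ref{sec2}, I would observe that this is exactly the statement that $D$ has difference levels $\mu_1 < \mu_2$ with $\mu_1 = \lambda$ and $\mu_2 = \lambda + 1$, where $t_1 = t$ and $t_2 = v-1-t$.

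Next I would apply Theorem~\ref{th1} to this $D$. Since $D$ is a $k$-subset of $G$ with difference levels $\mu_1 = \lambda < \mu_2 = \lambda+1$, the theorem guarantees that $(G, \mathrm{Dev}D)$ is a symmetric incidence structure with $2$-levels $\mu_1 < \mu_2$. Because $\mu_2 = \mu_1 + 1$, the condition defining a $2$-adesign is met: every $2$-subset of $G$ is incident with either $\lambda$ or $\lambda+1$ blocks, and both values are attained (as $t \geq 1$ and $v-1-t \geq 1$, assuming $D$ is a genuine almost difference set and not a difference set). Hence $(G, \mathrm{Dev}D)$ is a $2$-$(v,k,\lambda)$ adesign, as claimed.

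The only mild subtlety worth addressing is that the definition of $t$-levels requires each level to actually be realized by some $t$-subset; I would note that this is automatic here provided $t \geq 1$ and $v-1-t \geq 1$, which holds for a bona fide almost difference set (if one of these counts were zero, $D$ would degenerate into an ordinary difference set, the $s=1$ case). Since the statement is explicitly flagged as a trivial consequence of Theorem~\ref{th1}, I do not anticipate any real obstacle; the work is entirely in recognizing that the almost-difference-set condition is a relabeling of the two-difference-level condition with consecutive multiplicities, after which the corollary follows from the theorem essentially by definition-chasing.
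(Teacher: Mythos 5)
Your proposal is correct and matches the paper's intent exactly: the paper offers no written proof, stating only that the corollary is a trivial consequence of Theorem~\ref{th1}, and your argument is precisely that definition-chasing (an almost difference set is a $k$-subset with difference levels $\lambda < \lambda+1$, so Theorem~\ref{th1} gives $2$-levels $\lambda < \lambda+1$, which is the definition of a $2$-$(v,k,\lambda)$ adesign). Your remark that both levels must genuinely occur (i.e.\ $t \geq 1$ and $v-1-t \geq 1$) is a sensible extra precaution that the paper leaves implicit.
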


Let $(V,\mathcal{B})$ be an incidence structure with $t$-levels $\mu_{1} < \cdots < \mu_{s}$. Let $A$ be a $v$ by $b$ matrix whose rows and columns are indexed by points and blocks respectively and whose $(i,j)$-th entry is $1$ if the point corresponding to the $i$th row is incident with the block corresponding to the $j$th row, and $0$ otherwise. We call $A$ the {\it incidence matrix} of $(V,\mathcal{B})$. We will denote the $n\times n$ identity and all-one matrices by $I_{n}$ and $J_{n}$ respectively, or, when it is clear from the context, simply by $I$ and $J$.

\begin{lemma}\label{le1} Let $D$ be a $k$-subset of an Abelian group $G$ of cardinality $v$ with difference levels $\mu_{1} < \cdots < \mu_{s}$. Let $A$ be the $v \times v$ incidence matrix of the symmetric incidence structure $(G,$ Dev$D)$. Then \begin{equation}\label{eq11} A^{T}A=AA^{T}=kI+\mu_{1}A_{1}+\cdots +\mu_{s-1}A_{s-1}+\mu_{s}(J-\sum_{i=1}^{s-1}A_{i}-I), \end{equation} where $A_{i}$ is a binary matrix whose $(g,h)$th entry is $1$ if the points of $G$ corresponding to $g$ and $h$ appear together in exactly $\mu_{i}$ blocks of $DevD$, and is $0$ otherwise.
\end{lemma}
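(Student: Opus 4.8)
The plan is to compute both Gram matrices $AA^T$ and $A^TA$ entrywise and match them against the right-hand side. First I would fix notation for $A$: index the rows by the points $g\in G$ and the columns by the blocks $D+x$ of $\mathrm{Dev}D$ (so both rows and columns are indexed by $G$), giving $A_{g,x}=1$ exactly when $g\in D+x$, i.e. when $g-x\in D$. This reduces every matrix product to a counting problem about membership in $D$.

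Next I would handle $AA^T$. Its $(g,h)$ entry is $\sum_{x\in G}A_{g,x}A_{h,x}$, which counts the translates $D+x$ containing both $g$ and $h$. On the diagonal ($g=h$) this is $\#\{x: g-x\in D\}=|D|=k$, since $x\mapsto g-x$ is a bijection of $G$. Off the diagonal ($g\neq h$) it is precisely the number of $x\in G$ with $\{g,h\}\subseteq D+x$, which by Theorem \ref{th1} (applied with the roles of $x,y$ there played by $g,h$) equals $\mu_j$, where $j$ is the unique index with $g-h\in T_j$. By the definition of $A_i$ in the statement, $(A_i)_{g,h}=1$ iff $g,h$ lie together in exactly $\mu_i$ blocks, i.e. iff $g-h\in T_i$; hence the off-diagonal part of $AA^T$ is $\sum_{i=1}^s\mu_i A_i$, and altogether $AA^T=kI+\sum_{i=1}^s\mu_iA_i$.

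I would then treat $A^TA$ similarly. Its $(x,y)$ entry $\sum_{g\in G}A_{g,x}A_{g,y}$ equals $|(D+x)\cap(D+y)|$; substituting $u=g-x$ turns this into $\#\{u\in D: u+(x-y)\in D\}$, namely the multiplicity of the difference $x-y$ in the multiset $\{u'-u: u,u'\in D\}$. On the diagonal this is $k$, and off the diagonal it is $\mu_j$ with $x-y\in T_j$. Here I would note that since $G$ is Abelian the multiplicities of $d$ and $-d$ coincide, so each $T_i$ is symmetric and each $A_i$ is a symmetric matrix; this is what guarantees that the matrices $A_i$ arising from block intersections are the same $A_i$ as before, and therefore $A^TA=kI+\sum_{i=1}^s\mu_iA_i=AA^T$.

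Finally I would convert $\sum_{i=1}^s\mu_iA_i$ into the stated form. Since the $T_i$ partition $G-\{0\}$, every off-diagonal pair $(g,h)$ has $g-h$ in exactly one $T_i$, so $\sum_{i=1}^sA_i=J-I$; solving for the top level gives $A_s=J-I-\sum_{i=1}^{s-1}A_i$. Substituting this for the $\mu_sA_s$ term yields exactly $kI+\mu_1A_1+\cdots+\mu_{s-1}A_{s-1}+\mu_s(J-\sum_{i=1}^{s-1}A_i-I)$, as required. I expect the only real subtlety to be the bookkeeping in this last identification---specifically, checking that the symmetry of $G$ makes the single family $\{A_i\}$ serve simultaneously for $AA^T$ and $A^TA$---rather than any of the individual counts, each of which is immediate from Theorem \ref{th1}.
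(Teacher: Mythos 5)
Your proof is correct, and it reaches the conclusion by a somewhat different mechanism than the paper does for the one step that actually requires work, namely the equality $A^{T}A=AA^{T}$. The paper treats $AA^{T}=kI+\sum\mu_{i}A_{i}$ as immediate and then proves directly that $|(D+x)\cap(D+y)|=\mu_{e}$ precisely when $\{x,y\}$ lies in exactly $\mu_{e}$ translates, by an explicit element-level bijection: it lists the points of $D\cap(D+y)$, matches each one to a translate $D+h_{j}$ containing $0$, and runs the relabeling in both directions. You instead route both quantities through the difference multiset: the change of variables $u=g-x$ identifies $|(D+x)\cap(D+y)|$ with the multiplicity of $x-y$ in $\{u'-u\mid u,u'\in D\}$, while Theorem \ref{th1} (more precisely, the correspondence $d\in T_{j}\Leftrightarrow$ the pair lies in $\mu_{j}$ blocks established in its proof) gives the same value for the block count. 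This buys a cleaner argument --- the paper's bijection is essentially a re-derivation, in different notation, of the computation already carried out in Theorem \ref{th1}, and your version makes that dependence explicit instead of repeating it; your remark that each $T_{i}$ is closed under negation is exactly the observation needed to justify using a single family $\{A_{i}\}$ on both sides, a point the paper leaves tacit. The final reduction via $\sum_{i=1}^{s}A_{i}=J-I$ is the same in both treatments. One small caveat, which applies equally to the paper: identifying the block index set with $G$ presumes the translates $D+x$ are pairwise distinct, which is implicit in calling $(G,\mathrm{Dev}D)$ symmetric with a $v\times v$ incidence matrix; neither you nor the paper needs to do more than note this.
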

\begin{proof} The second equality in (\ref{eq11}) is clear. We show that if $x,y \in G$ and $x \neq y$ then $|(D+x) \cap (D+y)|=\mu_{e}$ for some $e$, $1 \leq e \leq s$ if and only if $\{x,y\}$ is contained in exactly $\mu_{e}$ blocks.
\par
First we may denote the members of $D$ by $g_{1},...,g_{k}$. Suppose, without loss of generality, that $x=0$, so that we have \[ (D+x)\cap(D+y)=D\cap(D+y)=\{g_{\alpha_{1}},...,g_{\alpha_{\mu_{s}}}\} \]
where $1 \leq \alpha_{i} \leq k$ for each $i$, $1 \leq i \leq \mu_{e}$.
Let $D+h_{1},...,D+h_{k}$ be the $k$ translates of $D$ containing $0$ and, if necessary, relabel so that \[
g_{1}+h_{1}=\cdots = g_{k}+h_{k}=0. \] It suffices to show that $y$ is contained in exactly $\mu_{e}$ of the $k$ translates $D+h_{j}$, $1 \leq j \leq k$. Now fix $i'$, $1 \leq i' \leq \mu_{e}$. We have \[
g_{\alpha_{i'}} \in \{g_{1}+y,...,g_{k}+y\}\cap\{g_{1},...,g_{k}\}. \] There exists an integer $y'$ such that $g_{\alpha_{i'}}=g_{\alpha_{i'}-y'}+y$. Now we have $g_{\alpha_{i'}}+h_{\alpha_{i'}-y'}=g_{\alpha_{i'}-y'}+y+h_{\alpha_{i'}-y'}=y$. Since the $h_{j}$'s are distinct and there are $\mu_{e}$ of them, we have shown that the condition is necessary.
\par
Now suppose that $\{0,y\}$ is contained in exactly $\mu_{j}$ translates, namely $D+\kappa_{1},...,D+\kappa_{\mu_{e}}$. Relabel if necessary, so that $g_{1}+\kappa_{1}=\cdots=g_{\mu_{e}}=0$ where the $g_{i}$'s are distinct and contained in $D$, and $h_{1}+\kappa_{1}=\cdots=h_{\mu_{e}}=y$ where the $h_{i}$'s and are distinct and contained in $D$. Now notice that $y=h_{i}-g_{i}=h_{j}-g_{j}$ for all $i \neq j$, $1 \leq i,j \leq \mu_{e}$. This gives us $h_{i}+g_{j}=h_{j}+g_{i}$ so that $h_{i}+g_{j}+\kappa_{j}=g_{i}+h_{j}+\kappa_{j}$. This expression is clearly a member of $D\cap(D+y)$. Since the $h_{i}$'s and $g_{i}$'s are distinct and there are exactly $\mu_{e}$ of each, we have that the condition is sufficient.
\end{proof}

\begin{remark} We say that a $k$-subset $D$ in an Abelian group $G$ of cardinality $v$ is {\it cyclic} if $G$ is cyclic, and {\it Paley type} if $D=D^{-1}:=\{-x \mid x \in D\}$. Assume $G$ is cyclic and $D \subseteq G$ is Paley type. Let $A=(a_{i,j})$ be the incidence matrix of $(G,$ Dev$D)$. Notice that if $a_{i,j}=1$ then $a_{0,j-i}=1$ so that $j-i \in D$. But $i-j$ is also in $D$ since $D$ is Paley type. Thus $a_{0,i-j}=1$ whence $a_{j,i}=1$. It is clear that the condition is in fact necessary and sufficient. Thus, if $D$ has $s$ difference levels in a cyclic group $G$, and $A$ is the $v \times v$ incidence matrix of the symmetric incidence structure $(G,$ Dev$D)$, then $D$ is Paley type if and only if $A=A^{T}$.
\end{remark}

Next, we give some constructions of $2$-adesigns from almost difference sets.

\section{Constructions of $2$-adesigns from Quadratic Residues}\label{sec3}

Cyclotomic classes have proven to be a powerful tool for constructing difference sets and almost difference sets, e.g. see \cite{DHM}, \cite{NOW}, \cite{DING}. Let $q$ be a prime power, $\mathbb{F}_{q}$ a finite field, and $e$ a divisor of $q-1$. For a primitive element $\alpha$ of $\mathbb{F}_{q}$ let $D_{0}^{e}$ denote $\langle \alpha^{e} \rangle$, the multiplicative group generated by $\alpha^{e}$, and let \[ D_{i}^{e} = \alpha^{i}D_{0}^{e}, \text{ for } i=1,2,...,e-1. \] We call $D_{i}^{e}$ the {\it cyclotomic classes} of order $e$. The {\it cyclotomic numbers} of order $e$ are defined to be \[ (i,j)_{e} = \left| D_{i}^{e} \cap (D_{j}^{e} + 1) \right|. \]

It is easy to see there are at most $e^{2}$ different cyclotomic numbers of order $e$. When it is clear from the context, we simply denote $(i,j)_{e}$ by $(i,j)$.
The cyclotomic numbers $(h,k)$ of order $e$ have the following properties (\cite{D}):

\begin{eqnarray}\label{eq7}
(h,k) & = & (e-h,k-h), \\
(h,k) & = & \begin{cases}
(k,h),                         & \text{if } f \text{ even},\\
(k+\frac{e}{2},h+\frac{e}{2}), & \text{if } f \text{ odd}.
\end{cases}
\end{eqnarray}

Our first three constructions make use of quadratic residues. We will need the following lemma \cite{D}.

\begin{lemma}\label{le3} If $q \equiv 1$ (mod 4) then the cyclotomic numbers of order two are given by
\begin{eqnarray*}
(0,0) & = & \frac{q-5}{4},                         \\
(0,1) & = & (1,0) = (1,1) = \frac{q-1}{4}.
\end{eqnarray*} If $q \equiv 3$ (mod 4) then are given by
\begin{eqnarray*}
(0,1) & = & \frac{q+1}{4},                         \\
(0,1) & = & (1,0) = (1,1) = \frac{q-3}{4}.
\end{eqnarray*}
\end{lemma}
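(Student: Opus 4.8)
The plan is to convert the definition $(i,j)_2=|D_i^2\cap(D_j^2+1)|$ into a sum involving the quadratic character $\chi$ of $\mathbb{F}_q$, where $\chi(x)=1$ for $x\in D_0^2$, $\chi(x)=-1$ for $x\in D_1^2$, and $\chi(0)=0$. For $x\neq 0$ the indicator of the event $x\in D_i^2$ is $\tfrac12\bigl(1+(-1)^i\chi(x)\bigr)$, and since $(i,j)_2$ counts those $z$ with $z\in D_i^2$ and $z-1\in D_j^2$ (which forces $z\notin\{0,1\}$), I would write
\[
(i,j)_2=\sum_{z\in\mathbb{F}_q\setminus\{0,1\}}\frac{1+(-1)^i\chi(z)}{2}\cdot\frac{1+(-1)^j\chi(z-1)}{2}.
\]
Expanding the product splits this into four sums: a constant sum equal to $q-2$, two ``linear'' character sums, and one ``mixed'' sum.

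The two linear sums come straight from orthogonality. Using $\sum_{x\in\mathbb{F}_q}\chi(x)=0$ and removing the contributions of the excluded points $z=0,1$, I obtain $\sum_{z\neq 0,1}\chi(z)=-\chi(1)=-1$ and $\sum_{z\neq 0,1}\chi(z-1)=-\chi(-1)$, where $\chi(-1)=(-1)^{(q-1)/2}$ equals $+1$ when $q\equiv 1\pmod 4$ and $-1$ when $q\equiv 3\pmod 4$. The only substantial ingredient is the mixed sum $\sum_{z\neq 0,1}\chi(z)\chi(z-1)=\sum_{z\in\mathbb{F}_q}\chi(z(z-1))$, the excluded points contributing $0$. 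I would evaluate it by the substitution $z\mapsto z^{-1}$: for $z\neq 0$ one has $\chi(z(z-1))=\chi(z^2)\chi(1-z^{-1})=\chi(1-z^{-1})$, and as $z$ ranges over $\mathbb{F}_q^{\ast}$ the argument $1-z^{-1}$ ranges over $\mathbb{F}_q\setminus\{1\}$, so the sum equals $\sum_{w\neq 1}\chi(w)=-\chi(1)=-1$.

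Collecting the four contributions yields the single closed form
\[
(i,j)_2=\frac14\Bigl((q-2)-(-1)^i-(-1)^j\chi(-1)-(-1)^{i+j}\Bigr),
\]
and the proof then finishes by substituting $\chi(-1)=(-1)^{(q-1)/2}$ and separating the cases $q\equiv 1\pmod 4$ and $q\equiv 3\pmod 4$, reading the four values off directly (in each case three of them coincide and one is exceptional). I expect the evaluation of the quadratic sum $\sum_z\chi(z(z-1))$ to be the only genuine obstacle --- everything else is orthogonality bookkeeping --- together with the care needed to keep the sign of $\chi(-1)$ straight so that the two residue classes of $q$ modulo $4$ produce the stated values.
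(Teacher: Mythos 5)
Your proof is correct, and it is more than the paper offers: the paper states this lemma without proof, simply citing its reference \cite{D}, so there is no internal argument to compare against. Your character-sum derivation is the standard one and every step checks out --- the indicator identity, the two linear sums $\sum_{z\neq 0,1}\chi(z)=-1$ and $\sum_{z\neq 0,1}\chi(z-1)=-\chi(-1)$, and the evaluation $\sum_{z}\chi(z(z-1))=\sum_{w\neq 1}\chi(w)=-1$ via $z\mapsto z^{-1}$ are all right, and the resulting closed form
\[
(i,j)_2=\tfrac14\bigl((q-2)-(-1)^i-(-1)^j\chi(-1)-(-1)^{i+j}\bigr)
\]
reproduces the $q\equiv 1\pmod 4$ table exactly. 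One point deserves care when you ``read the four values off directly'' in the case $q\equiv 3\pmod 4$: the lemma as printed is internally inconsistent (it assigns $(0,1)$ two different values and never mentions $(0,0)$), and with the paper's convention $(i,j)_2=|D_i^2\cap(D_j^2+1)|$ --- which is the transpose of the classical ``$x\in D_i$, $x+1\in D_j$'' convention --- your formula shows the exceptional entry is $(1,0)=\frac{q+1}{4}$, with $(0,0)=(0,1)=(1,1)=\frac{q-3}{4}$ (checked directly for $q=7$). So your closed form not only proves the lemma but also pins down which of the two plausible corrections of the misprinted statement is the right one for the definition this paper actually uses; you should state that resolution explicitly rather than leaving it at ``three coincide and one is exceptional.''
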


We are ready to give our first construction.

\begin{theorem}\label{th557} Let $q$ be an odd prime power and $\alpha$ a primitive member of $\mathbb{F}_{q}$. Define $C_{i}=\{z\in \mathbb{Z}_{q-1} \mid \alpha^{z}\in D_{i}^{2}-1\}$ for $i=1,2$. Then the incidence structure $(\mathbb{Z}_{q-1}\cup\{\infty\},Dev^{\infty}C_{0} \cup DevC_{1})$, where $Dev^{\infty}C_{0}$ denotes the blocks of $DevC_{0}$ each modified by adjoining the point ``$\infty$'', is a $2$-$(q,\frac{q-1}{2},\frac{q-5}{2})$ adesign.
\end{theorem}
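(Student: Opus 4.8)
The plan is to verify the parameters directly and then show the structure has exactly two $2$-levels differing by one. First I would record the point count, $v=|\mathbb{Z}_{q-1}|+1=q$, and compute the two block sizes. Writing $\chi$ for the quadratic character of $\mathbb{F}_q$, the condition $\alpha^{z}\in D_i^{2}-1$ says that $\alpha^{z}+1$ lies in the cyclotomic class $D_i^{2}$; since $1\in D_0^{2}$ forces the forbidden value $\alpha^{z}=0$, one finds $|C_0|=\frac{q-3}{2}$ while $|C_1|=\frac{q-1}{2}$. Hence each block of $\mathrm{Dev}^{\infty}C_0$ has size $|C_0|+1=\frac{q-1}{2}$ and each block of $\mathrm{Dev}\,C_1$ has size $\frac{q-1}{2}$, so $k=\frac{q-1}{2}$ as claimed (and $b=2(q-1)$).

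Next I would split the point pairs into two types. For a pair $\{\infty,x\}$ with $x\in\mathbb{Z}_{q-1}$, only the blocks of $\mathrm{Dev}^{\infty}C_0$ contain $\infty$, and exactly $|C_0|=\frac{q-3}{2}$ of them contain $x$; so every such pair lies in $\frac{q-3}{2}$ blocks. The substantive case is a pair $\{x,y\}$ with $d=x-y\neq 0$ in $\mathbb{Z}_{q-1}$: since $\infty$ is common to all $\mathrm{Dev}^{\infty}C_0$ blocks, the number of blocks containing the pair equals $\delta_{C_0}(d)+\delta_{C_1}(d)$, where $\delta_{C_i}(d)$ counts ordered pairs $(a,b)\in C_i\times C_i$ with $a-b=d$.

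The heart of the argument, and the step I expect to be the main obstacle, is evaluating this difference count. Setting $w=\alpha^{d}$ and $u=\alpha^{b}$, membership of the ordered pair $(b+d,b)$ in $C_i\times C_i$ becomes $u+1\in D_i^{2}$ and $wu+1\in D_i^{2}$; summing over $i\in\{0,1\}$ says that $u+1$ and $wu+1$ are nonzero and share the same quadratic character. Using $[\chi(u+1)=\chi(wu+1)]=\tfrac12\bigl(1+\chi((u+1)(wu+1))\bigr)$ for admissible $u$, I would write $\delta_{C_0}(d)+\delta_{C_1}(d)=\tfrac{q-3}{2}+\tfrac12\Sigma$, where $\Sigma=\sum_{u}\chi((u+1)(wu+1))$ runs over the $q-3$ admissible values (those with $u\notin\{0,-1,-w^{-1}\}$). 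The quadratic polynomial $(u+1)(wu+1)$ has discriminant $(w-1)^{2}\neq 0$, since $d\neq 0$ forces $w\neq 1$, so the standard evaluation of a quadratic character sum gives $\sum_{u\in\mathbb{F}_q}\chi((u+1)(wu+1))=-\chi(w)$; removing the three excluded values, whose only nonzero contribution is $\chi(1)=1$ at $u=0$, yields $\Sigma=-\chi(w)-1$ and hence $\delta_{C_0}(d)+\delta_{C_1}(d)=\tfrac12\bigl(q-4-\chi(w)\bigr)$.

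Finally, since $\chi(\alpha^{d})=(-1)^{d}$, this count equals $\frac{q-5}{2}$ when $d$ is even and $\frac{q-3}{2}$ when $d$ is odd. Combined with the value $\frac{q-3}{2}$ for all pairs through $\infty$, the structure has exactly the two $2$-levels $\mu_1=\frac{q-5}{2}$ and $\mu_2=\frac{q-3}{2}$, both of which occur for $q\geq 7$ and differ by one. This is precisely the assertion that $(\mathbb{Z}_{q-1}\cup\{\infty\},\,\mathrm{Dev}^{\infty}C_0\cup\mathrm{Dev}\,C_1)$ is a $2$-$(q,\frac{q-1}{2},\frac{q-5}{2})$ adesign. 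The only delicate points are the bookkeeping of the excluded values in the character sum and confirming that $|C_0|=\frac{q-3}{2}$ (so that adjoining $\infty$ restores the common block size $\frac{q-1}{2}$); everything else is routine.
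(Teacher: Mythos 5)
Your proof is correct, and it takes a genuinely different route from the paper's. The paper evaluates $|C_0\cap(C_0+w)|$ and $|C_1\cap(C_1+w)|$ separately by translating each into cyclotomic numbers of order two, which forces a case analysis on $q\bmod 4$, on the parity of $w$, and on whether $(1-\alpha^{w})^{-1}$ is a square; it then sums the two intersection numbers and passes through the dual/incidence-matrix lemma to recover the pair counts. You instead evaluate the combined quantity $\delta_{C_0}(d)+\delta_{C_1}(d)$ in one stroke as a complete quadratic character sum, using $\sum_{u\in\mathbb{F}_q}\chi\bigl((u+1)(wu+1)\bigr)=-\chi(w)$ (valid since the discriminant $(w-1)^2$ is nonzero for $d\neq 0$) together with the bookkeeping of the three excluded values $u\in\{0,-1,-w^{-1}\}$. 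This buys two things. First, it handles $q\equiv 1$ and $q\equiv 3\pmod{4}$ uniformly and makes manifest that the pair count depends only on $\chi(\alpha^{d})$, i.e.\ on the parity of $d$; in the paper's tabulation that fact is obscured, since the apparent dependence on the class of $(1-\alpha^{w})^{-1}$ must cancel upon summation (and, as printed, the paper's $q\equiv 3\pmod{4}$ totals contain slips --- e.g.\ for $q=11$, $w=2$ the true combined count is $3=\frac{q-5}{2}$, not $\frac{q-3}{2}$ as its second case asserts --- although the theorem itself is correct and your formula agrees with the worked example, where the pair $\{0,2\}$ lies in exactly $3$ blocks). Second, your direct count of pairs through $\infty$ and your computation of $|C_0|=\frac{q-3}{2}$, $|C_1|=\frac{q-1}{2}$ replace the paper's appeal to self-duality and replication numbers with elementary counting. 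The only caveat is the one you already flagged: one needs $q\geq 7$ so that both levels $\frac{q-5}{2}<\frac{q-3}{2}$ are positive and both actually occur.
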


\begin{proof} We will denote $\{\alpha^z\mid z\in C_{i}\}$ by $\alpha^{C_{i}}$. For $w \in \mathbb{Z}_{q-1}$ we have \[\left| C_{0} \cap (C_{0}+w) \right|=\left| \alpha^{C_{0}} \cap \alpha^{C_{0}+w} \right|\] which is \[\begin{cases}
\left|(D_{0}^{2}-\{1\})\cap(D_{0}^{2}-\{\alpha^{w}\}+(1-\alpha^{w}))\right| & \text{ if } w \text{ even},\\
\left|(D_{0}^{2}-\{1\})\cap(D_{1}^{2}-\{\alpha^{w}\}+(1-\alpha^{w}))\right| & \text{ if } w \text{ odd}.\end{cases}\] Since $\alpha^{w}(1-\alpha^{w})^{-1}=(1-\alpha^{w})^{-1}-1$, this becomes \[\begin{cases}
\left|(D_{0}^{2}-\{(1-\alpha^{w})^{-1}\})\cap(D_{0}^{2}-\{(1-\alpha^{w})^{-1}-1\}+1)\right| & \text{ if } w \text{ even},\\
\left|(D_{0}^{2}-\{(1-\alpha^{w})^{-1}\})\cap(D_{1}^{2}-\{(1-\alpha^{w})^{-1}-1\}+1)\right| & \text{ if } w \text{ odd},\end{cases}\] which simplifies to
\[\begin{cases}
\left|D_{0}^{2}\cap(D_{0}^{2}+1)-\{(1-\alpha^{w})^{-1}\}\right| & \text{ if } w \text{ even},\\
\left|D_{0}^{2}\cap(D_{1}^{2}+1)-\{(1-\alpha^{w})^{-1}\}\right| & \text{ if } w \text{ odd}.\end{cases}\]

There are four cases depending on the parity of $w$ and whether $(1-\alpha^{w})^{-1} \in D_{0}^{2}$ or $D_{1}^{2}$. By Theorem \ref{le3} we have \[\left| C_{0} \cap (C_{0}+w) \right|=\begin{cases}
(0,0)-1 & \text{ if } w \text{ even and } (1-\alpha^{w})^{-1}\in D_{0}^{2},\\
(0,0)   & \text{ if } w \text{ even and } (1-\alpha^{w})^{-1}\in D_{1}^{2},\\
(0,1)-1 & \text{ if } w \text{ odd and } (1-\alpha^{w})^{-1}\in D_{0}^{2},\\
(1,0)-1 & \text{ if } w \text{ even and } (1-\alpha^{w})^{-1}\in D_{0}^{2}.\end{cases}\]

Thus if $q \equiv 1($mod $4)$ then \[\left| C_{0} \cap (C_{0}+w) \right|=\begin{cases}
\frac{q-9}{4} & \text{ if } w \text{ even and } (1-\alpha^{w})^{-1}\in D_{0}^{2}, \\
\frac{q-5}{4} & \text{ otherwise}, \end{cases}\] and if $q \equiv 3 ($mod $4)$ then
\[\left| C_{0} \cap (C_{0}+w) \right|=\begin{cases}
\frac{q-3}{4} & \text{ if } w \text{ odd and } (1-\alpha^{w})^{-1}\in D_{0}^{2}
\text{ or if } w \text{ even and } (1-\alpha^{w})^{-1}\in D_{1}^{2}, \\
\frac{q-7}{4} & \text{ otherwise}. \end{cases}\]

Also, we have
\[\left| C_{1} \cap (C_{1}+w) \right|=\begin{cases}
\left|D_{1}^{2}\cap(D_{1}^{2}+1)+(1-\alpha^{w})\right| & \text{ if } w \text{ even},\\
\left|D_{1}^{2}\cap(D_{1}^{2}+1)+(1-\alpha^{w})\right| & \text{ if } w \text{ odd}.\end{cases}\]
Thus if $q \equiv 1 ($mod $4)$ then \[\left| C_{1} \cap (C_{1}+w) \right|=\begin{cases}
\frac{q-5}{4} & \text{ if } w \text{ even and } (1-\alpha^{w})^{-1}\in D_{1}^{2},\\
\frac{q-1}{4} & \text{ otherwise}. \end{cases}\] and if $q \equiv 3 ($mod $4)$ then
\[\left| C_{1} \cap (C_{1}+w) \right|=\begin{cases}
\frac{q+1}{4} & \text{ if } w \text{ odd and } (1-\alpha^{w})^{-1}\in D_{1}^{2},\\
\frac{q-3}{4} & \text{ otherwise}. \end{cases}\]

We need to compute the number of blocks of $(\mathbb{Z}_{q-1},DevC_{0}\cup DevC_{1})$ in which an arbitrary pair of points appear. Consider the incidence structures $(\mathbb{Z}_{q-1},DevC_{i})$ for $i=0,1$. Let $C_{i}^{\perp},(C_{i}+w)^{\perp}$ denote the points of the dual structures $(DevC_{i},\mathbb{Z}_{q-1})$ corresponding to the blocks $C_{i},C_{i}+w$. We have that $(\mathbb{Z}_{q-1},DevC_{i})$ is a self-dual incidence structure and by Lemma \ref{le1} the number of blocks of $(\mathbb{Z}_{q-1},DevC_{0} \cup DevC_{1})$ in which the points $C_{i}^{\perp},(C_{i}+w)^{\perp}$ appear is, if $q\equiv 1($mod $4)$, \[ \begin{cases} \frac{q-9}{4}+\frac{q-1}{4}=\frac{2q-10}{4} & \text{ if } w \text{ even and } (1-\alpha^{w})^{-1} \in D_{0}^{2},\\
\frac{q-5}{4}+\frac{q-5}{4}=\frac{2q-10}{4} & \text{ if } w \text{ even and } (1-\alpha^{w})^{-1} \in D_{1}^{2},\\
\frac{q-5}{4}+\frac{q-1}{4}=\frac{2q-6}{4} & \text{ otherwise},
\end{cases}\] and if $q\equiv 3($mod $4)$,
\[ \begin{cases} \frac{q-3}{4}+\frac{q-3}{4}=\frac{2q-6}{4} & \text{ if } w \text{ odd and } (1-\alpha^{w})^{-1} \in D_{0}^{2},\\
\frac{q-7}{4}+\frac{q+1}{4}=\frac{2q-6}{4} & \text{ if } w \text{ even and } (1-\alpha^{w})^{-1} \in D_{1}^{2},\\
\frac{q-7}{4}+\frac{q-3}{4}=\frac{2q-10}{4} & \text{ otherwise}.
\end{cases}\]
It is easy to see that the block sizes of the incidence structures $(\mathbb{Z}_{q-1},DevC_{0})$ and $(\mathbb{Z}_{q-1},DevC_{1})$ are $\frac{q-3}{2}$ and $\frac{q-1}{2}$ respectively and that the number of blocks containing a given point in $(\mathbb{Z}_{q-1},DevC_{0})$ is $\frac{2q-6}{4}$. Then the incidence structure $(\mathbb{Z}_{q-1}\cup\{\infty\},Dev^{\infty}C_{0} \cup DevC_{1})$, where $Dev^{\infty}C_{0}$ denotes the blocks of $DevC_{0}$ each modified by adjoining the point $\infty$, is a $2$-adesign.
\end{proof}

\begin{example} With $q=11$  and $C_{i}$ defined as in Theorem \ref{th557} we get that $(\mathbb{Z}_{10},Dev^{\infty}C_{0} \cup DevC_{1})$ is a $2$-$(10,5,3)$ adesign with blocks:
\begin{center}
\begin{tabular}{ccccc}
$\{0,1,3,4,8\}$&$\{1,3,4,6,7\}$&$\{2,4,5,7,8\}$&$\{0,2,3,7,9\}$&$\{3,5,6,8,9\}$ \\
$\{0,1,5,7,8\}$&$\{1,2,4,5,9\}$&$\{0,2,3,5,6\}$&$\{0,4,6,7,9\}$&$\{1,2,6,8,9\}$ \\
$\{2,5,6,7,\infty\}$&$\{0,1,6,9,\infty\}$&$\{3,6,7,8,\infty\}$&$\{1,4,5,6,\infty\}$&$\{0,1,2,7,\infty\}$\\
$\{1,2,3,8,\infty\}$&$\{2,3,4,9,\infty\}$&$\{4,7,8,9,\infty\}$&$\{0,5,8,9,\infty\}$&$\{0,3,4,5,\infty\}$
\end{tabular}\end{center}
\end{example}

The next two constructions will use the following lemmas.

\begin{lemma}\label{le51} (\cite{AND}) Let $p$ be a prime. The number of pairs of consecutive quadratic residues mod $p$ is \[ N(p)=\frac{1}{4}(p-4-(-1)^{\frac{p-1}{2}}) \] and the number of pairs of consecutive quadratic non-residues mod $p$ is \[ N'(p)=\frac{1}{4}(p-2+(-1)^{\frac{p-1}{2}}). \]
\end{lemma}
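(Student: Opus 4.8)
The plan is to count pairs of consecutive quadratic residues (and non-residues) using the Legendre symbol and a standard character-sum evaluation. Let $\left(\frac{\cdot}{p}\right)$ denote the Legendre symbol modulo $p$, extended by the convention $\left(\frac{0}{p}\right)=0$. The key observation is that for consecutive integers $x$ and $x+1$ with $1\le x\le p-2$, both are quadratic residues exactly when $\left(\frac{x}{p}\right)=\left(\frac{x+1}{p}\right)=1$, and the indicator of ``$y$ is a nonzero QR'' is $\tfrac{1}{2}\left(1+\left(\frac{y}{p}\right)\right)$. Thus I would write
\[
N(p)=\sum_{x=1}^{p-2}\frac{1}{2}\Bigl(1+\Bigl(\tfrac{x}{p}\Bigr)\Bigr)\cdot\frac{1}{2}\Bigl(1+\Bigl(\tfrac{x+1}{p}\Bigr)\Bigr),
\]
and expand the product into four sums.

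The main computation reduces to evaluating $\sum_{x}\left(\frac{x(x+1)}{p}\right)$, which is the essential character sum. First I would handle the three easy pieces: $\sum_x 1$ contributes the number of terms, while $\sum_x \left(\frac{x}{p}\right)$ and $\sum_x \left(\frac{x+1}{p}\right)$ are each a complete sum over nonzero residues minus a boundary term, and a complete sum of the Legendre symbol over all residues vanishes. The remaining sum $S=\sum_{x=1}^{p-2}\left(\frac{x^2+x}{p}\right)$ is the crux. The standard trick is to factor out a square: for $x\not\equiv 0$, write $x^2+x=x^2(1+x^{-1})$, so $\left(\frac{x^2+x}{p}\right)=\left(\frac{1+x^{-1}}{p}\right)$, and as $x$ runs over the nonzero residues so does $x^{-1}$, turning $S$ into a shifted complete character sum that collapses to $-1$ after accounting for the excluded values. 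Substituting this value of $S$ and the boundary corrections back into the expanded expression yields $N(p)=\tfrac14\bigl(p-4-\left(\frac{-1}{p}\right)\bigr)$, and since $\left(\frac{-1}{p}\right)=(-1)^{(p-1)/2}$ this is exactly the claimed formula.

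The hardest part will be bookkeeping the boundary terms correctly: the sums run over $1\le x\le p-2$ rather than over a full period, so each of the four pieces loses one or two endpoint contributions, and the value of $\left(\frac{-1}{p}\right)$ enters precisely through the term $\left(\frac{0}{p}\right)$ getting excluded when $x+1\equiv 0$ is avoided. I would track these endpoint adjustments explicitly to make sure the constant $-4$ versus $-2$ and the sign of $(-1)^{(p-1)/2}$ come out right. For $N'(p)$ the computation is identical with the indicator $\tfrac12\bigl(1-\left(\frac{y}{p}\right)\bigr)$ in place of $\tfrac12\bigl(1+\left(\frac{y}{p}\right)\bigr)$; the linear character sums again vanish, the quadratic sum $S$ is the same, and only the signs of the cross terms flip, producing $N'(p)=\tfrac14\bigl(p-2+(-1)^{(p-1)/2}\bigr)$. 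As a consistency check I would verify $N(p)+N'(p)+(\text{mixed pairs})=p-2$, which confirms the constants without re-deriving them.
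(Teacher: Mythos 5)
Your character-sum argument is correct: the indicator expansion, the vanishing of the complete Legendre sums with the endpoint corrections $-\left(\frac{-1}{p}\right)$ and $-1$, and the evaluation $\sum_{x}\left(\frac{x(x+1)}{p}\right)=-1$ via $x^2+x=x^2(1+x^{-1})$ all check out and yield exactly the stated formulas. The paper itself gives no proof of this lemma --- it is quoted from the cited reference \cite{AND} --- and your proof is precisely the standard argument found there, so there is nothing to compare beyond noting that your derivation is sound.
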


In the sequel we will sometimes use the following lemma without making reference to it.

\begin{lemma}\label{le50} (\cite{ACH}) Let $p\equiv 1 ($mod $4)$ be a prime. The the set of quadratic residues mod $p$ forms a $(p,\frac{p-1}{2},\frac{p-5}{4},\frac{p-1}{2})$ almost difference set in $\mathbb{Z}_{p}$.
\end{lemma}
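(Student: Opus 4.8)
The plan is to identify the quadratic residues with the cyclotomic class $D_{0}^{2}$ and to reduce the computation of difference multiplicities to the cyclotomic numbers of order two, whose values for $p \equiv 1 \pmod 4$ are supplied by Lemma \ref{le3}. First I would set $D = D_{0}^{2}$, the set of nonzero quadratic residues modulo $p$. Since $p \equiv 1 \pmod 4$ is prime, $|D| = \frac{p-1}{2} = k$, so the first defining property of an almost difference set holds immediately. For the second property I must determine, for each nonzero $d \in \mathbb{Z}_{p}$, the multiplicity $N(d)$ with which $d$ occurs in the difference multiset $\{x-y \mid x,y \in D\}$. Observing that $x - y = d$ with $x,y \in D$ is equivalent to $x \in D \cap (D+d)$, I would rewrite $N(d) = |D \cap (D+d)|$.

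The key step is a scaling argument. Multiplication by $d^{-1}$ is a bijection of $\mathbb{Z}_{p}^{*}$, so $N(d) = |d^{-1}D \cap (d^{-1}D + 1)|$. Because $d^{-1}$ is a quadratic residue exactly when $d$ is, multiplication by $d^{-1}$ fixes $D_{0}^{2}$ setwise when $d$ is a residue and carries $D_{0}^{2}$ onto $D_{1}^{2}$ when $d$ is a non-residue. Hence $N(d) = |D_{0}^{2} \cap (D_{0}^{2} + 1)| = (0,0)_{2}$ if $d$ is a quadratic residue, and $N(d) = |D_{1}^{2} \cap (D_{1}^{2} + 1)| = (1,1)_{2}$ if $d$ is a non-residue.

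Finally I would invoke Lemma \ref{le3}: for $p \equiv 1 \pmod 4$ we have $(0,0)_{2} = \frac{p-5}{4}$ and $(1,1)_{2} = \frac{p-1}{4} = \frac{p-5}{4} + 1$. There are exactly $\frac{p-1}{2}$ nonzero quadratic residues and $\frac{p-1}{2}$ non-residues in $\mathbb{Z}_{p}$, so $d$ occurs $\lambda = \frac{p-5}{4}$ times for each of the $\frac{p-1}{2}$ residues and $\lambda + 1$ times for each of the remaining non-residues. Since $v - 1 - t = p - 1 - \frac{p-1}{2} = \frac{p-1}{2}$, this is precisely the definition of a $(p, \frac{p-1}{2}, \frac{p-5}{4}, \frac{p-1}{2})$ almost difference set, and as a sanity check the totals satisfy the basic equation (\ref{eq1}).

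The main obstacle, really the only delicate point, is getting the scaling argument exactly right: tracking whether $d^{-1}$ preserves or interchanges the two cyclotomic classes, and confirming that the two relevant cyclotomic numbers $(0,0)_{2}$ and $(1,1)_{2}$ differ by exactly one, so that the multiplicities realize the almost difference set condition $\mu_{2} = \mu_{1} + 1$ rather than a larger gap. Everything else reduces to a direct substitution of the values from Lemma \ref{le3}.
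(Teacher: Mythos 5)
The paper does not prove Lemma \ref{le50} --- it is quoted from \cite{ACH} --- so there is no internal proof to compare against; judged on its own, your argument is correct and is the standard one. The scaling step $|D\cap(D+d)|=|d^{-1}D\cap(d^{-1}D+1)|$, the identification of the two multiplicities with $(0,0)_{2}=\frac{p-5}{4}$ and $(1,1)_{2}=\frac{p-1}{4}$ via Lemma \ref{le3}, and the count of $\frac{p-1}{2}$ residues versus $\frac{p-1}{2}$ non-residues together give exactly the stated parameters, and your check against equation (\ref{eq1}) confirms the bookkeeping.
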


\begin{lemma}\label{le52} Let $p \equiv 1 ($mod $4)$ be a prime and $D \subseteq \mathbb{Z}_{p}$ be the set of quadratic residues. Two distinct points $x,y \in D$ occur together in exactly $\frac{p-5}{4}$ translates of $D$ if and only if $x-y$ is a quadratic residue. Dually, $D+x$ and $D+y$ are translates of $D$ with $x-y \in D$ if and only if $|(D+x)\cap(D+y)|=\frac{p-5}{4}$.
\end{lemma}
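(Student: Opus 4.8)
The plan is to reduce both equivalences to a single computation: the multiplicity with which a given nonzero $d \in \mathbb{Z}_p$ occurs as a difference of two elements of $D$. Write $D = D_0^2$ for the set of nonzero quadratic residues, and recall from Lemma \ref{le50} that $D$ is a $(p,\frac{p-1}{2},\frac{p-5}{4},\frac{p-1}{2})$ almost difference set, so its two difference levels are $\mu_1 = \frac{p-5}{4}$ and $\mu_2 = \mu_1 + 1 = \frac{p-1}{4}$, with associated sets $T_1, T_2$ partitioning $\mathbb{Z}_p \setminus \{0\}$. By Theorem \ref{th1}, the number of translates $D + g$ containing a fixed pair of distinct points $\{x,y\}$ is exactly $\mu_j$, where $x - y \in T_j$. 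Thus the first assertion is equivalent to the statement $T_1 = D_0^2$, i.e. that $d$ lies in the level-$\mu_1$ set if and only if $d$ is a quadratic residue; note this holds for all distinct $x,y$, so the hypothesis $x,y \in D$ is harmless.

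For the core computation I would set $N(d) = |\{(a,b) \in D \times D : a - b = d\}| = |D_0^2 \cap (D_0^2 + d)|$, which is precisely the multiplicity of $d$ as a difference. Applying the bijection $z \mapsto d^{-1} z$ of $\mathbb{Z}_p$ sends this set to $d^{-1}D_0^2 \cap (d^{-1}D_0^2 + 1)$. If $d$ is a quadratic residue then $d^{-1}D_0^2 = D_0^2$, so $N(d) = |D_0^2 \cap (D_0^2 + 1)| = (0,0)$; if $d$ is a non-residue then $d^{-1}D_0^2 = D_1^2$, so $N(d) = |D_1^2 \cap (D_1^2 + 1)| = (1,1)$. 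Invoking Lemma \ref{le3} for $p \equiv 1 \pmod 4$ gives $(0,0) = \frac{p-5}{4} = \mu_1$ and $(1,1) = \frac{p-1}{4} = \mu_2$. Hence $N(d) = \frac{p-5}{4}$ precisely when $d$ is a quadratic residue, establishing $T_1 = D_0^2$ and therefore the primal equivalence.

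The dual statement then follows by translating one block onto the other: since $(D+x) \cap (D+y)$ is carried by the shift $z \mapsto z - y$ onto $D \cap (D + (x-y))$, we get $|(D+x)\cap(D+y)| = N(x-y)$, and the computation above gives $N(x-y) = \frac{p-5}{4}$ if and only if $x - y \in D_0^2 = D$. Equivalently, one may quote Lemma \ref{le1}, which already identifies $|(D+x)\cap(D+y)|$ with the number of blocks through $\{x,y\}$, and combine it with the primal result. The one point requiring genuine care --- and the only real obstacle --- is the scaling step: one must check that multiplication by $d^{-1}$ preserves or interchanges the cyclotomic classes according to the quadratic character of $d$, so that the count lands on the correct cyclotomic number, $(0,0)$ versus $(1,1)$. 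Everything else is bookkeeping, together with the observation that $p \equiv 1 \pmod 4$ forces $-1$ to be a square, so that $D$ is Paley type and $N$ is symmetric, keeping the two formulations consistent.
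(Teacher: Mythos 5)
Your proof is correct, but it takes a different route from the paper's. The paper argues by contradiction using Lemma \ref{le51}: assuming $\{x,1\}\subseteq D$ lies in exactly $\frac{p-5}{4}$ translates while $x-1$ is a non-residue, it converts the resulting difference representations of $x-1$ (by multiplying by $(x-1)^{-1}$) into pairs of consecutive quadratic non-residues and finds one too few of them, contradicting the count $N'(p)=\frac{p-1}{4}$. You instead compute the difference function directly: $N(d)=|D_0^2\cap(D_0^2+d)|$ is carried by the scaling $z\mapsto d^{-1}z$ onto $(0,0)_2$ or $(1,1)_2$ according to the quadratic character of $d$, and Lemma \ref{le3} evaluates these as $\frac{p-5}{4}$ and $\frac{p-1}{4}$; Theorem \ref{th1} then converts the difference multiplicity into the translate count. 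The two approaches rest on equivalent arithmetic facts (the consecutive-residue counts of Lemma \ref{le51} are exactly the order-two cyclotomic numbers $(0,0)$ and $(1,1)$), but yours is the more direct and transparent one: it determines both difference levels at once rather than ruling out one case by contradiction, it makes clear that the hypothesis $x,y\in D$ is irrelevant to the first equivalence, and it yields the dual statement immediately from $|(D+x)\cap(D+y)|=N(x-y)$ rather than by appeal to symmetry of the development. The only step needing care, as you note, is that $d^{-1}D_0^2$ equals $D_0^2$ or $D_1^2$ according to whether $d$ is a residue, and that is standard.
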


\begin{proof} Let $x,y \in D$ be distinct. Denote $\frac{p-5}{4}$ by $\lambda$. Without loss of generality we can take $y=1$. Let \[ D, D+\alpha_{1},..., D+\alpha_{\lambda-1} \]be precisely the $\lambda$ translates of $D$ in which $x$ and $1$ appear together. Then \[ x=x_{1}+\alpha_{1}=\cdots=x_{\lambda-1}+\alpha_{\lambda-1} \]for some distinct quadratic residues $x_{1},...,x_{\lambda-1}$ and \[ 1=y_{1}+\alpha_{1}=\cdots=y_{\lambda-1}+\alpha_{\lambda-1} \] for some distinct quadratic residues $y_{1},...,y_{\lambda-1}$. Now suppose that $x-1$ is a quadratic non-residue. Then \[ x-1=x_{1}-y_{1}=\cdots=x_{\lambda-1}-y_{\lambda-1} \] which, by multiplying by appropriate inverses, gives precisely $\lambda$ pairs of consecutive non-residues, these being the {\it only} pairs of consecutive quadratic non-residues. But this contradicts Lemma \ref{le51}, from which we have that the number of pairs of consecutive quadratic non-residues is $\lambda+1$. The condition is necessary and sufficient, and the dual argument follows from the fact that the $2$-adesign $(\mathbb{Z}_{p}$, Dev$D)$ is symmetric.
\end{proof}

We are now ready to construct two more families of $2$-adesigns.

\begin{theorem}\label{th51} Let $p\equiv 1 ($mod $4)$ be a prime greater than $5$, and let $D \subseteq \mathbb{Z}_{p}$ be the set of quadratic residues. Let $\mathcal{B}=\{b\cap D \mid b \in$ Dev$D,b\neq D\}$, and let $\mathcal{B}_{\infty}$ be the set containing all members of $\mathcal{B}$ of size $\frac{p-1}{4}$, as well as all members of $\mathcal{B}$ of size $\frac{p-5}{4}$ modified by adjoining the point $\infty$. Then $(D\cup\{\infty\},\mathcal{B}_{\infty})$ is a $2$-$(\frac{p+1}{2},\frac{p-1}{4},\frac{p-9}{4})$ adesign.
\end{theorem}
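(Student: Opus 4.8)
The plan is to first pin down the block sizes and then count, for every pair of distinct points of $D\cup\{\infty\}$, how many blocks of $\mathcal{B}_{\infty}$ contain it, splitting into pairs lying inside $D$ and pairs of the form $\{x,\infty\}$. For a nonzero $g$, the translate $D+g$ meets $D$ in $|(D+g)\cap D|$ points, and this number is exactly the multiplicity of $g$ in the difference multiset of $D$. By Lemma~\ref{le50} together with Lemma~\ref{le52}, that multiplicity is $\frac{p-5}{4}$ when $g$ is a quadratic residue and $\frac{p-1}{4}$ when $g$ is a non-residue. Hence the blocks $b\cap D$ of size $\frac{p-5}{4}$ are precisely those coming from residues, adjoining $\infty$ to exactly these makes every block of $\mathcal{B}_{\infty}$ have size $\frac{p-1}{4}$, and $\infty$ lies in a block if and only if that block arose from a residue $g$.

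Next I would treat a pair $\{x,y\}\subseteq D$ with $x\neq y$. Since $x,y\neq\infty$, the adjoining of $\infty$ is irrelevant, so $\{x,y\}$ lies in $b\cap D$ exactly when $x,y\in D+g$, i.e. $x-g,y-g\in D$. Writing $a=x-g$ and $d=y-x$, these conditions become $a\in D$ and $a+d\in D$, so the number of admissible $g\neq 0$ equals $|D\cap(D-d)|-1$, where the $-1$ removes the excluded block $D$ (the term $g=0$, i.e. $a=x$). But $|D\cap(D-d)|$ is again the difference multiplicity of $d$, so by Lemma~\ref{le52} the pair lies in $\frac{p-5}{4}-1=\frac{p-9}{4}$ blocks when $d=y-x$ is a residue, and in $\frac{p-1}{4}-1=\frac{p-5}{4}$ blocks when $d$ is a non-residue.

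Finally I would count a pair $\{x,\infty\}$ with $x\in D$. Only the residue-blocks contain $\infty$, so I need the number of quadratic residues $g$ with $x\in(D+g)\cap D$, i.e. $x-g\in D$. Substituting $g=xu$ and using that $x$ is a residue turns the two conditions $g\in D$ and $x-g\in D$ into $u\in D$ and $1-u\in D$; since $-1$ is a residue (as $p\equiv 1\pmod 4$), this count equals $|D_{0}^{2}\cap(D_{0}^{2}+1)|=(0,0)_{2}=\frac{p-5}{4}$ by Lemma~\ref{le3}. Thus every such pair lies in exactly $\frac{p-5}{4}$ blocks.

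Combining the three cases, the $2$-levels are $\frac{p-9}{4}$ and $\frac{p-5}{4}$, which are consecutive, and both are realized once $p>5$ (so that both residue and non-residue differences occur among pairs of $D$); together with the uniform block size $\frac{p-1}{4}$ and point set of size $\frac{p+1}{2}$, this is exactly a $2$-$(\frac{p+1}{2},\frac{p-1}{4},\frac{p-9}{4})$ adesign. I expect the main delicacy to be bookkeeping rather than any single hard estimate: correctly observing that $\infty$ sits only in the residue-blocks (so it never interferes with a pair already inside $D$), and remembering to subtract the one excluded block $D$ in the within-$D$ count while checking that the analogous degenerate term $u=1$ is automatically absent from the cyclotomic count $(0,0)_{2}$.
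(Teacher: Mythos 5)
Your proof is correct and follows essentially the same route as the paper's: classify the blocks by the quadratic character of the translating element, count pairs inside $D$ via the almost-difference-set multiplicities (minus one for the excluded block $D$), and count pairs $\{x,\infty\}$ by reducing to consecutive quadratic residues (your $(0,0)_{2}$ from Lemma~\ref{le3} is the same quantity as the paper's $N(p)$ from Lemma~\ref{le51}). If anything, your bookkeeping of the excluded translate $D$ is more explicit than the paper's, whose prose momentarily states the within-$D$ counts without the $-1$ even though the final parameters require it.
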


\begin{proof} Let $x,y \in D$ be distinct. Denote $\frac{p-5}{4}$ by $\lambda$ and $\frac{p-1}{2}$ by $k$. If $x$ and $y$ appear together in exactly $\lambda$ translates of $D$, then $x$ and $y$ appear together in exactly $\lambda$ blocks in $\mathcal{B}_{\infty}$. Similarly, if $x$ and $y$ appear together in $\lambda+1$ translates of $D$ then $x$ and $y$ appear together in $\lambda+1$ blocks in $\mathcal{B}_{\infty}$. We want to show that $x$ and $\infty$ appear together in exactly $\lambda$ blocks in $\mathcal{B}_{\infty}$. Without loss of generality, we can take $x=1$. There are $k-1$ blocks in $\mathcal{B}_{\infty}$ containing $1$. Let \[ D, D+\alpha_{1},...,D+\alpha_{w} \] be precisely the translates of $D$ containing $1$. By Lemma \ref{le52}, if $|D\cap(D+\alpha_{i})|=\lambda$, then $\alpha_{i}$ is a quadratic residue. If $y+\alpha_{i}=1$ then we have a pair $y,-\alpha_{i}$ of consecutive quadratic residues. By Lemma \ref{le51}, the number of pairs of consecutive quadratic residues is exactly $\lambda$.
\par
To see that there are pairs $x,y \in D$ of distinct points appearing in $\lambda-1$ blocks as well as those appearing in $\lambda$ blocks, suppose that $y_{1},...,y_{k-1}$ be the $k-1$ points in $D-\{1\}$. We can again, without loss of generality, take $x=1$. Suppose that $1$ and $y_{i}$ appear together in exactly $\lambda$ translates of $D$ for each $i$, $1\leq i\leq k-1$. The $y_{i}-1 \in D$ for all $y_{i}$. By Lemma \ref{le51} this gives too many pairs of consecutive quadratic residues, which completes the proof.
\end{proof}

\begin{example} With $p=13$ we apply Theorem \ref{th51} and get that $(D\cup\{\infty\},\mathcal{B}_{\infty})$ is a $2$-$(7,3,1)$ adesign and $\mathcal{B}_{\infty}$ contains the following blocks:
\begin{center}
\begin{tabular}{cccc}
$\{4,10,\infty\}$&$\{3,4,10\}$&$\{1,3,12\}$&$\{4,9,12\}$ \\
$\{4,12,\infty\}$&$\{10,12,\infty\}$&$\{1,3,\infty\}$&$\{4,9,\infty\}$ \\
$\{1,4,9\}$&$\{1,10,12\}$&$\{3,9,10\}$&$\{3,9,\infty\}$
\end{tabular}\end{center}
\end{example}

Let $\mathcal{B}$ and $\mathcal{B}_{\infty}$ be defined as in Theorem \ref{th51}. The second construction is the following.

\begin{theorem}\label{th52} Let $p\equiv 1 ($mod $4)$ be a prime greater than $5$, and let $D \subseteq \mathbb{Z}_{p}$ be the set of quadratic residues. Let $\bar{\mathcal{B}}_{\infty}$ be the set of complements of members of $\mathcal{B}_{\infty}$ in $\mathbb{Z}_{p}\cup\{\infty\}$. Then $(D\cup\{\infty\},\bar{\mathcal{B}}_{\infty})$ is a $2$-$(\frac{p+1}{2},\frac{p+3}{4},\frac{p-5}{4})$ adesign.
\end{theorem}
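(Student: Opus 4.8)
The plan is to realize $(D\cup\{\infty\},\bar{\mathcal{B}}_{\infty})$ as the block-complement of the adesign $(D\cup\{\infty\},\mathcal{B}_{\infty})$ of Theorem \ref{th51}, and to track how the number of blocks through a pair of points transforms under complementation. First I would record the structural data that Theorem \ref{th51} and its proof supply. There are $b=p-1$ blocks, each of size $\frac{p-1}{4}$, the $2$-levels are $\frac{p-9}{4}<\frac{p-5}{4}$, and—reading off the proof of Theorem \ref{th51} together with Lemmas \ref{le52} and \ref{le51}—a pair $\{x,y\}\subseteq D$ lies in $\frac{p-9}{4}$ blocks when $x-y$ is a quadratic residue and in $\frac{p-5}{4}$ blocks when $x-y$ is a non-residue, while every pair $\{x,\infty\}$ lies in exactly $\frac{p-5}{4}$ blocks. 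Complementing inside the point set $D\cup\{\infty\}$, which has $\frac{p+1}{2}$ points, immediately yields block size $\frac{p+1}{2}-\frac{p-1}{4}=\frac{p+3}{4}$, the claimed $k$; note that complementing in $\mathbb{Z}_{p}\cup\{\infty\}$ and then restricting incidences to $D\cup\{\infty\}$ gives the same structure, so the two readings of $\bar{\mathcal{B}}_{\infty}$ agree.

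The heart of the argument is the elementary observation that a pair $\{x,y\}$ is contained in a complemented block $\bar{B}$ exactly when the original $B$ contains neither $x$ nor $y$. Hence, by inclusion-exclusion, the number of blocks of $\bar{\mathcal{B}}_{\infty}$ through $\{x,y\}$ equals $b-r_{x}-r_{y}+\lambda_{xy}$, where $r_{x}$ is the replication number of $x$ in $\mathcal{B}_{\infty}$ and $\lambda_{xy}$ its pair-count there. The step that needs care is that $(D\cup\{\infty\},\mathcal{B}_{\infty})$ is \emph{not} equireplicate. I would compute $r_{\infty}$ by noting that $\infty$ lies in precisely the blocks adjoined with $\infty$, and $r_{x}$ for $x\in D$ by counting the translates $D+\alpha$ with $\alpha\neq 0$ and $x-\alpha\in D$; these give $r_{\infty}=\frac{p-1}{2}$ and $r_{x}=\frac{p-3}{2}$. (One can cross-check these against the incidence identities, since the sum of all replication numbers must equal $b\cdot\frac{p+3}{4}$.)

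Then I would substitute into $b-r_{x}-r_{y}+\lambda_{xy}$ for each of the three pair types. For a pair inside $D$ the sum $r_{x}+r_{y}$ is constant, so the two possible values $\lambda_{xy}\in\{\frac{p-9}{4},\frac{p-5}{4}\}$ are each shifted by the \emph{same} constant and hence again differ by exactly one; for a pair $\{x,\infty\}$ the constant value $\lambda_{x\infty}=\frac{p-5}{4}$ is shifted by a \emph{different} constant, because $r_{\infty}\neq r_{x}$. The substance of the proof is to verify that, after these shifts, exactly two distinct values occur, that they differ by $1$, and that each is realized, the smaller being $\mu_{1}$ and the larger $\mu_{1}+1$; this, together with the computed block size $\frac{p+3}{4}$, establishes the $2$-adesign property.

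The main obstacle I anticipate is precisely this interaction between the two distinct replication numbers: since the $\{x,\infty\}$-pairs and the $D$-pairs are shifted by different amounts, the delicate point is checking that the shifted $\{x,\infty\}$-value coincides with one of the two shifted $D$-pair values, so that the complement has two consecutive levels rather than spuriously producing a third. Pinning down these three computations and confirming they collapse onto two consecutive levels is the only subtle part; the remainder is the routine inclusion-exclusion bookkeeping already set up above, which also determines the exact value of $\mu_{1}$.
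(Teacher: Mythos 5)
Your plan follows the same route as the paper's own proof---complement the blocks of Theorem \ref{th51} and track pair-counts by inclusion--exclusion---but your execution is more careful, and the extra care matters. The paper's proof asserts that every point of $D\cup\{\infty\}$ misses exactly $k-1=\frac{p-3}{2}$ blocks of $\bar{\mathcal{B}}_{\infty}$; as you anticipate, this fails for $\infty$, whose replication number in $\mathcal{B}_{\infty}$ is $\frac{p-1}{2}$, not $\frac{p-3}{2}$. Your values $r_{x}=\frac{p-3}{2}$ for $x\in D$ and $r_{\infty}=\frac{p-1}{2}$ are correct (minor slip: the cross-check should use the block size $\frac{p-1}{4}$ of $\mathcal{B}_{\infty}$, not $\frac{p+3}{4}$). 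Carrying your plan to completion with $b=p-1$: pairs inside $D$ are shifted by $b-2r_{x}=2$, giving $\frac{p-1}{4}$ or $\frac{p+3}{4}$ according as $x-y$ is a residue or a non-residue, while pairs $\{x,\infty\}$ are shifted by $b-r_{x}-r_{\infty}=1$, giving $\frac{p-1}{4}$. So the structure is indeed a $2$-adesign with two consecutive levels, but those levels are $\frac{p-1}{4}$ and $\frac{p+3}{4}$, i.e., $\mu_{1}=\frac{p-1}{4}$ rather than the $\frac{p-5}{4}$ asserted in the theorem. The case $p=13$ confirms this: the $12$ complemented blocks have size $4$, so $\sum_{B}\binom{|B|}{2}=72$ spread over $\binom{7}{2}=21$ pairs, which is impossible if every pair lies in only $2$ or $3$ blocks; direct inspection shows every pair lies in $3$ or $4$ blocks. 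In short, your method is sound and in fact exposes an off-by-one error both in the paper's proof (which, even granting its equireplication assumption, evaluates $b-2(k-1)+\lambda$ as $\lambda+1$ when it equals $\lambda+2$) and in the stated parameter $\lambda$; the one remaining task in your write-up is simply to substitute the numbers, which settles $\mu_{1}$ as above.
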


\begin{proof} Let $x,y \in D\cup\{\infty\}$ be distinct. Denote $\frac{p-5}{4}$ by $\lambda$ and $\frac{p-1}{2}$ by $k$. Suppose $x$ and $y$ appear together in $\lambda$ blocks in $\mathcal{B}_{\infty}$. Then there are $\lambda$ blocks in $\bar{\mathcal{B}}_{\infty}$ not containing $x$ or $y$. Also there are $k-1$ blocks in $\bar{\mathcal{B}}_{\infty}$ not containing $x$ and $k-1$ blocks not containing $y$. Then the number of blocks in $\bar{\mathcal{B}}_{\infty}$ containing $x$ and $y$ is \[
|\bar{\mathcal{B}}_{\infty}|-|\{b\in\bar{\mathcal{B}}_{\infty}\mid x \notin b\}\cup\{b \in \bar{\mathcal{B}}_{\infty} \mid y \notin b\}|+|\{b\in\bar{\mathcal{B}}_{\infty}\mid x,y \notin b\}|\]which is easily seen to be $\lambda+1$. A similar calculation shows that if $x$ and $y$ appear together in $\lambda-1$ blocks in $\mathcal{B}_{\infty}$ then $x$ and $y$ appear together in $\lambda$ blocks $\bar{\mathcal{B}}_{\infty}$.
\end{proof}

\begin{example} With $p=13$ we apply Theorem \ref{th52} and get that $(D\cup\{\infty\},\bar{\mathcal{B}}_{\infty})$ is a $2$-$(7,4,2)$ adesign and $\bar{\mathcal{B}}_{\infty}$ contains the following blocks:
\begin{center}
\begin{tabular}{cccc}
$\{1,3,9,12\}$&$\{4,9,10,\infty\}$&$\{1,3,9,10\}$&$\{4,9,10,12\}$ \\
$\{3,9,10,\infty\}$&$\{1,4,12,\infty\}$&$\{1,9,12,\infty\}$&$\{1,3,10,\infty\}$ \\
$\{1,3,4,9\}$&$\{3,4,10,12\}$&$\{3,4,9,\infty\}$&$\{1,4,10,12\}$
\end{tabular}\end{center}
\end{example}

\begin{remark} Interestingly, the $2$-adesigns obtained by Theorem \ref{th52} in fact meet the packing bound given in Theorem \ref{th888}, i.e., they are a family of maximal $(\lambda+1)$-packings.
\end{remark}

\section{Constructions of $2$-adesigns that are Almost Difference Families}\label{sec4}

Almost difference families were studied by Ding et al. in \cite{ADF}. Suppsoe $G$ is a finite Abelian group f order $v$ in which the identity element is denoted ``0''. Let $k$ and $\lambda$ be positive integers such that $2\leq k<v$. A $(v,k,\lambda)$ {\it difference family} in $G$ is a collection of subsets $D_{0},...,D_{l}$ of $G$ such that
\begin{enumerate}
\item $\left|D_{i}\right|=k$ for all $i,0\leq i\leq l$,
\item the multiset union $\cup_{i=1}^{l}\{x-y \mid x,y \in D_{i}, x \neq y\}$ contains each member of $G-\{0\}$ either $\lambda$ times, \end{enumerate} and a $(v,k,\lambda,t)$ {\it almost difference family} is defined similarly only the multiset union $\cup_{i=1}^{l}\{x-y \mid x,y \in D_{i}, x \neq y\}$ contains $t$ members of $G-\{0\}$ with multiplicity $\lambda$ and $v-t-1$ members of $G$ with multiplicity $\lambda+1$.

It is trivial that an almost difference family is a $2$-adesign. All of the $2$-adesigns in this section are also almost difference families, however, our treatment will still be in terms of $2$-adesigns.

Our next two constructions make use of quadratic residues. We will need the following lemma \cite{D}.

\begin{lemma}\label{le4} Let $q=4f+1=x^{2}+4y^{2}$ be a prime power with $x,y \in \mathbb{Z}$ and $x \equiv 1$ (mod 4) (here, $y$ is two-valued depending on the choice of the primitive root $\alpha$ defining the cyclotomic classes). The five distinct cyclotomic numbers of order four for odd $f$ are
\begin{eqnarray*}
(0,0) & = & (2,2) = (2,0) = \frac{q-7+2x}{16}    \\
(0,1) & = & (1,3) = (3,2) = \frac{q+1+2x-8y}{16} \\
(1,2) & = & (0,3) = (3,1) = \frac{q+1+2x+8y}{16} \\
(0,2) & = & \frac{q+1-6x}{16}                    \\
\text{all others} & = & \frac{q-3-2x}{16}
\end{eqnarray*} and those for $f$ even are
\begin{eqnarray*}
(0,0) & = & \frac{q-11-6x}{16}    \\
(0,1) & = & (1,0) = (3,3) = \frac{q-3+2x+8y}{16} \\
(0,2) & = & (2,0) = (2,2) = \frac{q-3+2x}{16} \\
(0,3) & = & (3,0) = (1,1) = \frac{q-3+2x-8y}{16} \\
\text{all others} & = & \frac{q+1-2x}{16}
\end{eqnarray*}
\end{lemma}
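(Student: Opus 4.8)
\section*{Proof proposal}

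The plan is to treat Lemma \ref{le4} as the classical evaluation of the cyclotomic numbers of order four and to derive it from the theory of Jacobi sums, which is the standard route and the source cited in \cite{D}. First I would fix a primitive fourth root of unity $\zeta$ and the quartic residue character $\chi$ of $\mathbb{F}_q^*$ normalized by $\chi(\alpha)=\zeta$, so that membership in a cyclotomic class is detected by characters: for $w \in \mathbb{F}_q^*$ one has $\mathbf{1}_{D_m^4}(w)=\frac{1}{4}\sum_{a=0}^{3}\zeta^{-am}\chi^a(w)$. Substituting this into the defining count $(i,j)_4=|\{u : u \in D_j^4,\ u+1 \in D_i^4\}|$ and interchanging the order of summation, the problem collapses to evaluating the character sums $\sum_{u}\chi^a(u+1)\chi^b(u)$, which after the substitution $u\mapsto -u$ are the Jacobi sums $J(\chi^a,\chi^b)$; care must be taken with the degenerate terms ($a=0$ or $b=0$) and with the excluded points $u=0$ and $u+1=0$, which together contribute the explicit $q$-linear part of each formula.

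The arithmetic input is then the classical evaluation of the quartic Jacobi sum, namely $J(\chi,\chi)=a+b\zeta$ with $a^2+b^2=q$, $a$ odd and $b$ even; after normalizing $a\equiv-1\pmod 4$ one identifies $a$ and $b$ with $-x$ and $\pm 2y$ via the representation $q=x^2+4y^2$ with $x\equiv1\pmod 4$. The sign of $b$ is exactly the two-valued quantity flagged in the statement: it records the choice of primitive root $\alpha$ (equivalently, which quartic character is labelled $\chi$), and it is this ambiguity that produces the $\pm 8y$ appearing in the off-diagonal cyclotomic numbers. Writing $16(i,j)_4=\sum_{a,b=0}^{3}\zeta^{-ai-bj}J(\chi^a,\chi^b)$ up to the lower-order terms above, and collecting the real contributions, then yields each of the five distinct values.

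I would then split the analysis according to the parity of $f$, since this governs the symmetry relations (\ref{eq7}) that cut the sixteen cyclotomic numbers down to five. The dividing line is whether $-1=\alpha^{2f}$ lies in $D_0^4$ or $D_2^4$: for $f$ even one has $q\equiv1\pmod 8$ and $-1\in D_0^4$, whereas for $f$ odd one has $q\equiv5\pmod 8$ and $-1\in D_2^4$. In each case the appropriate relation $(h,k)=(k,h)$ or $(h,k)=(k+2,h+2)$, together with $(h,k)=(4-h,k-h)$, reduces the unknowns, and I would cross-check the Jacobi-sum output against the linear counting identities $\sum_{k}(h,k)_4=f-\theta_h$, where $\theta_h$ is the correction depending on whether $D_h^4$ contains $-1$.

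The main obstacle I expect is bookkeeping rather than conceptual: correctly accounting for the degenerate terms in the character expansion so that the pure $q$-part is right, and, more delicately, pinning down the sign conventions so that $x\equiv1\pmod 4$ and the stated placements of $\pm 8y$ agree with the chosen normalization of $\chi$. Matching the even-$f$ and odd-$f$ tables exactly, rather than merely up to a relabelling of the classes, is the step most prone to error, so I would verify the final formulas numerically for a small prime such as $q=13$ or $q=17$ before trusting them.
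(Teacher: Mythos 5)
The paper does not prove this lemma at all: it is quoted verbatim from the cited reference \cite{D} as a known evaluation of the order-four cyclotomic numbers, so there is no internal proof to compare against. Your Jacobi-sum route is the standard derivation found in that literature, and your outline has all the right ingredients: the character expansion $\mathbf{1}_{D_m^4}(w)=\frac{1}{4}\sum_a\zeta^{-am}\chi^a(w)$, the reduction of $(i,j)_4$ to sums $\sum_u\chi^a(u+1)\chi^b(u)$ with careful treatment of the $a=0$, $b=0$ and $u=0$, $u=-1$ degeneracies, the arithmetic input $J(\chi,\chi)=a+b\zeta$ with $a^2+b^2=q$ normalized against $q=x^2+4y^2$, $x\equiv1\pmod 4$ (with the sign of $b$ accounting for the two-valuedness of $y$), and the case split on the parity of $f$ via the cyclotomic class of $-1=\alpha^{2f}$ combined with the symmetry relations (\ref{eq7}).

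That said, what you have written is a plan rather than a proof: the lemma's entire content is sixteen explicit constants, and none of them is actually computed. The sentence ``collecting the real contributions then yields each of the five distinct values'' is precisely where the work lives, and you yourself identify the sign normalization and the degenerate terms as the error-prone steps without resolving them. Two small points to fix when you carry it out. First, under the paper's convention $(i,j)_e=|D_i^e\cap(D_j^e+1)|$, the counting identity that sees $-1$ is the sum over the \emph{first} index, $\sum_h(h,k)=f-\theta_k$ with $\theta_k=1$ exactly when $-1\in D_k^4$; the sum over $k$ instead produces $f-[h=0]$, so your stated cross-check is indexed the wrong way around. Second, the substitution $u\mapsto-u$ introduces a factor $\chi^b(-1)$, which is $\pm1$ depending on $b$ and on the parity of $f$; dropping it silently is exactly the kind of slip that would scramble the $\pm8y$ entries between the two tables. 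Your plan to verify numerically at $q=13$ or $q=17$ is the right safeguard, but until the tables are actually derived the proposal does not yet establish the lemma.
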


When computing difference levels of a subset $C$ of a group $G$, it is sometimes convenient to use the difference function which is defined as $d(w) = \left|C \cap (C+w)\right|$ where $C+w$ denotes the set $\{c+w \mid c\in C\}$. We are now ready to give our first construction of a $2$-adesign that is a difference family.
\begin{theorem}\label{th555} Let $q=4f+1=x^{2}+4y^{2}$ be a prime power with $f$ odd. Let $C_{0} = D_{0}^{4} \cup D_{1}^{4}, C_{1}=D_{0}^{4} \cup D_{2}^{4}$, and $C_{2}=D_{0}^{4} \cup D_{3}^{4}$. Then $(\mathbb{F}_{q},DevC_{0} \cup DevC_{1} \cup DevC_{2})$ is a $2$-$(q,\frac{q-1}{2},\frac{3q-11}{4})$ adesign.
\end{theorem}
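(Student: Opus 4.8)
The plan is to push everything through the difference functions $d_{C_j}(w)=|C_j\cap(C_j+w)|$ and to evaluate these via the cyclotomic numbers of order four supplied by Lemma \ref{le4}. First I would record the cheap data: each $C_j$ is a union of two cyclotomic classes of order four, so $|C_j|=2\cdot\frac{q-1}{4}=\frac{q-1}{2}=k$, and the three developments contribute $3q$ blocks in all. Then, exactly as in the counting behind Theorem \ref{th1} and Lemma \ref{le1}, for distinct points $x,y$ with $w=x-y$ the number of blocks of $DevC_{0}\cup DevC_{1}\cup DevC_{2}$ containing $\{x,y\}$ is the number of $g$ with $x-g,y-g\in C_j$ summed over $j$, which equals $D(w):=d_{C_0}(w)+d_{C_1}(w)+d_{C_2}(w)$. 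So it suffices to show that $D(w)$ attains only the two values $\frac{3q-11}{4}$ and $\frac{3q-7}{4}$ and that both occur.

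Next I would expand each difference function. Because the classes $D_i^4$ are pairwise disjoint, $d_{C_0}(w)=\sum_{i,j\in\{0,1\}}|D_i^4\cap(D_j^4+w)|$, and likewise for $C_1,C_2$ over the index sets $\{0,2\}$ and $\{0,3\}$. The key elementary identity is that if $w\in D_s^4$ then $|D_i^4\cap(D_j^4+w)|=(i-s,j-s)_4$ (indices mod $4$): this comes from the substitution $z=wz'$, which carries $D_i^4$ onto $D_{i-s}^4$ and $D_j^4+w$ onto $(D_{j-s}^4+1)$. Hence $D(w)$ depends only on the class $s$ of $w$, and I would assemble the full table of the sixteen cyclotomic numbers of order four for $f$ odd from the five given values using the symmetry relations (\ref{eq7}), then read off the nine terms entering $D(w)$ for each $s\in\{0,1,2,3\}$.

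Then I would substitute the values from Lemma \ref{le4}. Writing $m_0=(0,0)$, $m_1=(0,1)$, $m_2=(1,2)$, $m_3=(0,2)$, and $m_4$ for the common value of all remaining cyclotomic numbers, a short computation gives $D(w)=5m_0+m_1+m_2+m_3+4m_4$ for $s=0$ (and identically for $s=2$), and $D(w)=2m_0+2m_1+2m_2+6m_4$ for $s=1$ (and identically for $s=3$). The decisive feature is that $m_1$ and $m_2$ occur with equal coefficients in each sum, so their $\pm 8y$ parts cancel, and the coefficients of $x$ also cancel; one is left with $\frac{12q-44}{16}=\frac{3q-11}{4}$ in the first case and $\frac{12q-28}{16}=\frac{3q-7}{4}$ in the second. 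Since $-1\in D_2^4$ for $f$ odd, the coincidences $D(w)|_{s=0}=D(w)|_{s=2}$ and $D(w)|_{s=1}=D(w)|_{s=3}$ are exactly the expected symmetry $d_{C_j}(w)=d_{C_j}(-w)$, a convenient consistency check. Both values are attained (as $D_0^4\cup D_2^4$ and $D_1^4\cup D_3^4$ are nonempty) and differ by $1$, and the block size is $\frac{q-1}{2}$, so $(\mathbb{F}_{q},DevC_{0}\cup DevC_{1}\cup DevC_{2})$ is a $2$-$(q,\frac{q-1}{2},\frac{3q-11}{4})$ adesign.

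I expect the only genuine obstacle to be bookkeeping: listing all sixteen order-four cyclotomic numbers for $f$ odd correctly from the five given values, and then extracting the nine summands of each $d_{C_j}$ without sign or index slips. Once the table is in place the arithmetic collapses cleanly precisely because the $y$- and $x$-dependence drops out, so the conceptual content lies entirely in the reduction to $D(w)$ and in the class-dependence formula $|D_i^4\cap(D_j^4+w)|=(i-s,j-s)_4$.
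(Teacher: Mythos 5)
Your proposal is correct and follows essentially the same route as the paper: both reduce the pair-count to the summed difference function $D(w)=\sum_{j}|C_{j}\cap(C_{j}+w)|$ and evaluate it via the order-four cyclotomic numbers of Lemma \ref{le4}, obtaining $\frac{3q-11}{4}$ for $w$ in $D_{0}^{4}\cup D_{2}^{4}$ and $\frac{3q-7}{4}$ otherwise (the paper merely shortcuts $C_{1}=D_{0}^{2}$ through the order-two numbers of Lemma \ref{le3}, which agrees with your $3m_{0}+m_{3}$ and $m_{1}+m_{2}+2m_{4}$). The only blemish is the repeated ``nine terms'' --- each $d_{C_{j}}(w)$ has four summands, so $D(w)$ has twelve --- a verbal slip that does not affect your (correct) totals $5m_{0}+m_{1}+m_{2}+m_{3}+4m_{4}$ and $2m_{0}+2m_{1}+2m_{2}+6m_{4}$.
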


\begin{proof} Let $w^{-1} \in D_{h}^{4}$. First we let $C$ denote $D_{i}^{4} \cup D_{i+1}^{4}$. Then when we expand $\left| C \cap (C+w)\right|$ we get \[\left| D_{i+h}^{2} \cap (D_{i+h}^{2}+1) \right| + \left| D_{i+h}^{4} \cap (D_{i+h+1}^{4}+1) \right| + \left| D_{i+h+1}^{4} \cap (D_{i+h}^{4}+1) \right| +\left| D_{i+h+1}^{4} \cap (D_{i+h+1}^{4}+1) \right|\]

whence

\begin{eqnarray*}
\left| C \cap (C+w)\right| & = &(i+h,i+h) + (i+h,i+h+1) + (i+h+1,i+h) +(i+h+1,i+h+1) \\
                           & = & \begin{cases}
                                 \frac{q-2y-3}{4} & \text{for } i = 0 \text{ and } h=0 \text{ or } 2, \\
                                 \frac{q+2y-3}{4} & \text{for } i = 0 \text{ and } h=1 \text{ or } 3,\\
                                 \frac{q-2y-3}{4} & \text{for } i = 3 \text{ and } h=0 \text{ or } 2,\\
                                 \frac{q+2y-3}{4} & \text{for } i = 3 \text{ and } h=1 \text{ or } 3.
                                 \end{cases} \quad (\text{by Lemmas \ref{le3} and ~\ref{le4}})
\end{eqnarray*} We also have \begin{eqnarray*}
\left| C_{1} \cap (C_{1}+w)\right| & = & \begin{cases}\
\frac{q-5}{4} & \text{for } h = 0 \text{ or } 2, \\
~\frac{q-1}{4} & \text{for } h = 1 \text{ or } 3. \end{cases}
\end{eqnarray*}

Now consider the incidence structures $(\mathbb{F}_{q},DevC_{i})$ for $i=0,1,2$. Let $C_{i}^{\perp},(C_{i}+w)^{\perp}$ denote the points of the dual structures $(DevC_{i},\mathbb{F}_{q})$ corresponding to the blocks $C_{i},C_{i}+w$. We have that $(\mathbb{F}_{q},DevC_{i})$ is a self-dual incidence structure and by Lemma \ref{le1} the number of blocks of $(\mathbb{F}_{q},DevC_{0} \cup DevC_{1} \cup DevC_{2})$ which the points $C_{i}^{\perp},(C_{i}+w)^{\perp}$ appear in is \[ \begin{cases} \frac{3p-11}{4} & \text{ if } w^{-1} \in D_{0}^{4}\cup D_{2}^{4},\\
\frac{3p-7}{4}  & \text{ if } w^{-1} \in D_{1}^{4}\cup D_{3}^{4}.
\end{cases} \]
\end{proof}

Another construction is the following.

\begin{theorem}\label{th556} Let $q=4f+1=x^{2}+4y^{2}$ be a prime power with $f$ even. Then $(\mathbb{F}_{q},DevD_{0}^{4} \cup DevD_{2}^{4})$ is a $2$-$(q,\frac{q-1}{4},\frac{q-7-2x}{8})$ adesign.
\end{theorem}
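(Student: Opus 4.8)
The plan is to mirror the method of Theorem~\ref{th555}, reducing the pair-incidence counts of the combined structure to sums of cyclotomic numbers of order four and then invoking Lemma~\ref{le4}.

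First, since $f$ is even we have $(q-1)/2=2f\equiv 0\pmod 4$, so $-1=\alpha^{(q-1)/2}\in D_{0}^{4}$; consequently $-D_{i}^{4}=D_{i}^{4}$ for each $i$, i.e.\ each $D_{i}^{4}$ is Paley type. By the Remark following Lemma~\ref{le1} the incidence matrix of $(\mathbb{F}_{q},DevD_{i}^{4})$ is then symmetric, so each of these two structures is self-dual. This is the feature that will let me pass freely between block-intersection numbers and pair-incidence numbers via Lemma~\ref{le1}.

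Next, I would fix $w\in\mathbb{F}_{q}^{*}$ and write $w^{-1}\in D_{h}^{4}$. Exactly as in Theorem~\ref{th555}, multiplying through by $w^{-1}$ gives $|D_{a}^{4}\cap(D_{b}^{4}+w)|=(a+h,b+h)_{4}$; in particular $|D_{0}^{4}\cap(D_{0}^{4}+w)|=(h,h)_{4}$ and $|D_{2}^{4}\cap(D_{2}^{4}+w)|=(h+2,h+2)_{4}$, with indices read modulo $4$. Because the two constituent structures are self-dual, Lemma~\ref{le1} identifies each of these block-intersection numbers with the number of blocks of the corresponding development containing a fixed pair of points whose difference lies in the appropriate coset. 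Summing the two contributions, the number of blocks of $(\mathbb{F}_{q},DevD_{0}^{4}\cup DevD_{2}^{4})$ through an arbitrary pair of points is $(h,h)_{4}+(h+2,h+2)_{4}$, a quantity depending only on $h\bmod 4$.

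Finally I would substitute the $f$-even values from Lemma~\ref{le4}. The key simplification is that the $8y$ contributions in $(1,1)_{4}$ and $(3,3)_{4}$ are equal and opposite, so for $h\in\{1,3\}$ the sum collapses to $\frac{q-3+2x}{8}$, while for $h\in\{0,2\}$ the sum $(0,0)_{4}+(2,2)_{4}$ yields $\frac{q-7-2x}{8}$. Thus the combined structure has exactly two $2$-levels, and since each block is a single cyclotomic class of order four the block size is $\frac{q-1}{4}$; both levels are realized as $h$ ranges over $\{0,1,2,3\}$, so neither is vacuous. The crux of the argument, and the only genuinely delicate step, is verifying that these two levels differ by precisely $1$, thereby identifying $\frac{q-7-2x}{8}$ as $\mu_{1}=\lambda$ and $\frac{q-3+2x}{8}$ as $\mu_{1}+1$. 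This is exactly the point where the detailed arithmetic of the order-four cyclotomic numbers for even $f$, rather than merely their two-valued pattern, is indispensable, and where one must check carefully that the $x$-dependence of the two sums combines so as to leave a gap of exactly one.
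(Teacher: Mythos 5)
Your reduction of the pair-incidence counts to cyclotomic numbers is correct and is essentially the paper's own computation: for $w^{-1}\in D_{h}^{4}$ the count is $(h,h)_{4}+(h+2,h+2)_{4}$, which by Lemma \ref{le4} equals $\frac{q-7-2x}{8}$ for $h\in\{0,2\}$ and $\frac{q-3+2x}{8}$ for $h\in\{1,3\}$, the $\pm 8y$ terms cancelling exactly as you say. Your justification of self-duality via $-1\in D_{0}^{4}$ (so each $D_{i}^{4}$ is Paley type) is a nice touch that the paper simply asserts.

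The genuine gap is precisely the step you defer as ``the crux'': the two levels do \emph{not} in general differ by $1$. Their difference is
\[
\frac{q-3+2x}{8}-\frac{q-7-2x}{8}=\frac{x+1}{2},
\]
so the $x$-dependence does not combine to leave a gap of one; it survives. The gap is $1$ only when $x=1$, and is $-1$ when $x=-3$ (in which case $\mu_{1}$ and $\mu_{2}$ swap and the stated $\lambda=\frac{q-7-2x}{8}$ is actually the larger level). For any other admissible $x\equiv 1\pmod 4$ (e.g.\ $x=5$, giving a gap of $3$) the structure has two $2$-levels differing by more than $1$ and is not a $2$-adesign at all. The paper's own proof concedes exactly this in its final sentence (``\ldots is a $2$-adesign whenever $x=1$, or $-3$''), so the theorem as stated is missing a hypothesis. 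By asserting that a careful check will confirm a gap of exactly one, your proposal papers over the one point where the argument actually fails; to finish, you must carry out the subtraction above and either impose $x\in\{1,-3\}$ or flag the restriction as the paper does.
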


\begin{proof} We have, by Lemma \ref{le4}, \begin{eqnarray*}
\left| D_{i}^{4} \cap (D_{i}^{4}+w)\right| & = & \left| D_{h}^{4} \cap (D_{h}^{4}+1)\right| \\
                                          & = & (i+h,i+h) \\
                                          & = & \begin{cases}
                                          \frac{q-11-6x}{16} & \text{ if } h = 0,i=0 \text{ or } h = 2,i=2, \\
                                 \frac{q-3+2x-8y}{16} & \text{ if } h = 1,i=0\text{ or } h = 2,i=2,\\
                                 \frac{q-3+2x}{16} & \text{ if } h = 2,i=0\text{ or } h = 3,i=2,\\
                                 \frac{q-3+2x+8y}{16} & \text{for } h = 3,i=0\text{ or } h = 0,i=2.\\
                                 \end{cases}\end{eqnarray*}

Now consider the incidence structures $(\mathbb{F}_{q},DevD_{i}^{4})$ for $i=0,2$. Let $C_{i}^{\perp},(C_{i}+w)^{\perp}$ denote the points of the dual structures $(DevD_{i}^{4},\mathbb{F}_{q})$ corresponding to the blocks $C_{i},C_{i}+w$. We have that $(\mathbb{F}_{q},DevC_{i})$ is a self-dual incidence structure and by Lemma \ref{le1} the number of blocks of $(\mathbb{F}_{q},DevD_{0}^{4} \cup DevD_{2}^{4})$ which the points $C_{i}^{\perp},(C_{i}+w)^{\perp}$ appear in is \[ \begin{cases} \frac{2q-14-4x}{16} & \text{ if } w^{-1} \in D_{0}^{4}\cup D_{2}^{4},\\
\frac{2q-6+4x}{16}  & \text{ if } w^{-1} \in D_{1}^{4}\cup D_{3}^{4}.\end{cases}\] Thus, we have $(\mathbb{F}_{q},DevD_{0}^{4} \cup DevD_{2}^{4})$ is a 2-adesign whenever $x=1$, or $-3$.
\end{proof}

We close this section with yet a few more constructions. Now let $q$ be an odd prime power, and $C \subseteq \mathbb{F}_{q}$. According to \cite{NOW}, if \begin{enumerate}
\item $C=D_{i}^{4}\cup D_{i+1}^4$, $q\equiv5($mod $8)$ and $q=s^{2}+4$ with $s\equiv 1($mod $4)$, or
\item $C=D_{0}^{8}\cup D_{1}^{8}\cup D_{2}^{8}\cup D_{5}^{8}$, $q=l^{2}$ where $l$ is a prime power of form $l=t^{2}+2\equiv 3($mod $8)$, or
\item $C=\cup_{i\in I}D_{i}^{\sqrt{q}+1}$ where $I \subseteq \{0,1,..,\sqrt{q}\}$ with $\left|I\right|=\frac{\sqrt{q}+1}{2}$ and $q=l^{2}$ for some prime power $l$,
\end{enumerate} then $C$ is a $(q,\frac{q-1}{2},\frac{q-5}{4},\frac{q-1}{2})$ almost difference set in $\mathbb{F}_{q}$.

It is easy to show, also, that if $q$ is an odd prime power, $(\mathbb{F}_{q},DevD_{0}^{2}\cup DevD_{1}^{2})$ is a $2$-$(q,\frac{q-1}{2},\frac{2q-6}{4})$ design. We then have the following.

\begin{theorem} \label{th558}Let $q$ be an odd prime power, and $C \subseteq \mathbb{F}_{q}$. If
\begin{enumerate}
\item $C=D_{i}^{4}\cup D_{i+1}^{4}$, $q\equiv5($mod $8)$ and $q=s^{2}+4$ with $s\equiv 1($mod $4)$, or
\item $C=D_{0}^{8}\cup D_{1}^{8}\cup D_{2}^{8}\cup D_{5}^{8}$, $q=l^{2}$ where $l$ is a prime power of form $l=t^{2}+2\equiv 3($mod $8)$, or
\item $C=\cup_{i\in I}D_{i}^{\sqrt{q}+1}$ where $I \subseteq \{0,1,..,\sqrt{q}\}$ with $\left|I\right|=\frac{\sqrt{q}+1}{2}$, $I$ contains both even and odd numbers, and $q=l^{2}$ for some prime power $l$,
\end{enumerate} then $(\mathbb{F}_{q},DevD_{0}^{2}\cup DevD_{1}^{2}\cup DevC)$ is a $2$-$(q,\frac{q-1}{2},\frac{3q-11}{4})$ adesign.
\end{theorem}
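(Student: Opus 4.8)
The plan is to recognize $(\mathbb{F}_{q},DevD_{0}^{2}\cup DevD_{1}^{2}\cup DevC)$ as the superposition of a genuine $2$-design and a $2$-adesign living on the common point set $\mathbb{F}_{q}$, and then merely to add pair-incidence counts. The only genuinely new content is the verification that $C$ is the required almost difference set under each of the three hypotheses; once that is granted, the rest is pure additivity. First I would record that every block of the union has the same size $\frac{q-1}{2}$: indeed $|D_{0}^{2}|=|D_{1}^{2}|=\frac{q-1}{2}$, and in each of the three cases $C$ is, by \cite{NOW}, a $(q,\frac{q-1}{2},\frac{q-5}{4},\frac{q-1}{2})$ almost difference set, so $|C|=\frac{q-1}{2}$ as well.

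Next I would invoke the two component facts. As noted just before the statement, $(\mathbb{F}_{q},DevD_{0}^{2}\cup DevD_{1}^{2})$ is a $2$-$(q,\frac{q-1}{2},\frac{2q-6}{4})$ design, so every pair of distinct points of $\mathbb{F}_{q}$ lies in exactly $\frac{2q-6}{4}$ blocks of $DevD_{0}^{2}\cup DevD_{1}^{2}$. On the other hand, since $C$ is a $(q,\frac{q-1}{2},\frac{q-5}{4},\frac{q-1}{2})$ almost difference set, the corollary to Theorem~\ref{th1} shows that $(\mathbb{F}_{q},DevC)$ is a $2$-$(q,\frac{q-1}{2},\frac{q-5}{4})$ adesign; hence every pair lies in either $\frac{q-5}{4}$ or $\frac{q-1}{4}=\frac{q-5}{4}+1$ blocks of $DevC$, and both values are actually attained because the almost difference set has $t=\frac{q-1}{2}>0$ deficient differences and $v-1-t=\frac{q-1}{2}>0$ surplus differences.

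Finally I would combine the two counts. Because all three developments are taken over $\mathbb{F}_{q}$ and the block family is the (multiset) union of the three collections, the number of blocks of $(\mathbb{F}_{q},DevD_{0}^{2}\cup DevD_{1}^{2}\cup DevC)$ containing any fixed pair is the sum of the two counts above, namely
\[
\frac{2q-6}{4}+\frac{q-5}{4}=\frac{3q-11}{4}\quad\text{or}\quad\frac{2q-6}{4}+\frac{q-1}{4}=\frac{3q-7}{4}.
\]
These two values differ by exactly $1$ and, by the previous paragraph, both occur. Thus the structure has $2$-levels $\frac{3q-11}{4}<\frac{3q-7}{4}$ with $\mu_{2}=\mu_{1}+1$, which is precisely a $2$-$(q,\frac{q-1}{2},\frac{3q-11}{4})$ adesign.

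The main (indeed the only) obstacle is the almost difference set property of $C$ in the three cases, which is exactly the input cited from \cite{NOW}; the superposition step is then formal. I would also remark that the extra parity condition on $I$ in case~(3) plays no role in the counting: since $\sqrt{q}+1$ is even (as $q$ is an odd square), each class $D_{i}^{\sqrt{q}+1}$ lies entirely in the quadratic residues or in the non-residues according as $i$ is even or odd, so requiring $I$ to contain both parities merely guarantees $C\neq D_{0}^{2}$ and $C\neq D_{1}^{2}$, ensuring that $DevC$ is genuinely distinct from the two developments already present and that the construction is non-trivial.
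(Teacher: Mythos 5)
Your proposal is correct and is essentially the paper's own (implicit) argument: the paper states the two ingredients — that $C$ is a $(q,\frac{q-1}{2},\frac{q-5}{4},\frac{q-1}{2})$ almost difference set by \cite{NOW} and that $(\mathbb{F}_{q},DevD_{0}^{2}\cup DevD_{1}^{2})$ is a $2$-$(q,\frac{q-1}{2},\frac{2q-6}{4})$ design — and leaves the superposition of pair-counts, which you carry out explicitly, as the immediate consequence. Your additional check that both levels $\frac{3q-11}{4}$ and $\frac{3q-7}{4}$ are actually attained (since $t=v-1-t=\frac{q-1}{2}>0$) is a worthwhile detail the paper omits.
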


\section{Constructions of $2$-adesigns from Symmetric Designs}\label{sec4.5}

Let $(V,\mathcal{B})$ be an incidence structure with $\left|\mathcal{B}\right|=b$. The numbers of blocks in which given single points appear (called the {\it replication numbers}) become the block sizes of the dual $(V,\mathcal{B})^{\perp}$, and the intersection numbers among pairs of blocks become the numbers of blocks of $(V,\mathcal{B})^{\perp}$ in which any two points appear. Then the following is clear.

\begin{lemma}\label{le559} Let $(V,\mathcal{B})$ be an incidence structure with $\left|V\right|=v$, and in which the replication numbers are a constant $k$ and the intersection numbers among pairs of blocks are integers $\lambda$ and $\lambda+1$. Then $(V,\mathcal{B})^{\perp}$ is a $2$-$(b,k,\lambda)$ adesign.
\end{lemma}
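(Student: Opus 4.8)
The plan is to unwind the definition of the dual incidence structure $(V,\mathcal{B})^{\perp}=(\mathcal{B},V)$ and to check each of the three defining parameters of a $2$-adesign against the three hypotheses. Recall that in the dual the points are the original blocks, so there are $b=|\mathcal{B}|$ of them, which matches the leading parameter of the claimed $2$-$(b,k,\lambda)$ adesign; meanwhile each original point $p\in V$ becomes the dual block $p^{\perp}=\{B\in\mathcal{B}\mid p\in B\}$, so there are exactly $v$ dual blocks.

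First I would verify the block size. The dual block $p^{\perp}$ has cardinality equal to the number of original blocks containing $p$, i.e.\ the replication number of $p$. By hypothesis every replication number equals the constant $k$, so every dual block has size exactly $k$, which supplies the middle parameter.

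Next I would compute the $2$-level. Fix two distinct dual points, that is, two distinct original blocks $B_{1},B_{2}\in\mathcal{B}$. A dual block $p^{\perp}$ is incident with both $B_{1}$ and $B_{2}$ precisely when $p\in B_{1}$ and $p\in B_{2}$, i.e.\ when $p\in B_{1}\cap B_{2}$. Hence the number of dual blocks containing the pair $\{B_{1},B_{2}\}$ equals the intersection number $|B_{1}\cap B_{2}|$. By hypothesis this value is always $\lambda$ or $\lambda+1$; since these are consecutive integers and both actually occur, the incidence structure $(\mathcal{B},V)$ has $2$-levels exactly $\mu_{1}=\lambda<\mu_{2}=\lambda+1$, which is precisely the definition of a $2$-$(b,k,\lambda)$ adesign.

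There is no real obstacle here: the statement is essentially the translation, already flagged in the paragraph preceding it, that replication numbers dualize to block sizes while pairwise intersection numbers dualize to pair-containment counts. The only point deserving an explicit word is to confirm that \emph{both} levels $\lambda$ and $\lambda+1$ are genuinely attained, since if only one value occurred the dual would be a $2$-design rather than an adesign; but this is exactly what the hypothesis that the intersection numbers take the two values $\lambda$ and $\lambda+1$ is asserting.
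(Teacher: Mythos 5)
Your proposal is correct and follows exactly the route the paper intends: the paper offers no written proof beyond the preceding remark that replication numbers dualize to block sizes and pairwise block-intersection numbers dualize to pair-containment counts, and then declares the lemma ``clear.'' Your write-up simply makes that duality translation explicit, including the worthwhile observation that both values $\lambda$ and $\lambda+1$ must actually be attained for the dual to be an adesign rather than a $2$-design.
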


\begin{remark} The dual of a quasi-symmetric design whose intersection numbers $x,y$ are such that $y-x=1$ is always a $2$-adesign.
\end{remark}

We will use the following lemma which is actually a trivial construction in itself.

\begin{lemma}\label{le560} Let $(V,\mathcal{B})$ be a symmetric $2$-$(v,k,\lambda)$ design. Let $\mathbf{b}_{1},...,\mathbf{b}_{k}$ be any $k$ blocks in $\mathcal{B}$. Let ``$\infty$'' denote a point. Let $\mathcal{B}'$ denote the blocks of $\mathcal{B}$ modified by adjoining the point ``$\infty$'' to each of $\mathbf{b}_{1},...,\mathbf{b}_{k}$. Then $(V,\mathcal{B}')^{\perp}$ is a $2$-$(v,k,\lambda)$ adesign.
\end{lemma}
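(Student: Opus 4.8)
The plan is to apply Lemma~\ref{le559} to the modified incidence structure $(V\cup\{\infty\},\mathcal{B}')$, whose point set is $V\cup\{\infty\}$ and which has exactly $b=v$ blocks (the symmetric design has $v$ blocks, and adjoining $\infty$ to $k$ of them leaves their number unchanged). So it suffices to verify the two hypotheses of Lemma~\ref{le559} for this structure: that all its replication numbers equal the constant $k$, and that the intersection numbers of its pairs of blocks take exactly the two values $\lambda$ and $\lambda+1$. Once these are checked, Lemma~\ref{le559} yields that $(V\cup\{\infty\},\mathcal{B}')^{\perp}$ is a $2$-$(v,k,\lambda)$ adesign, which is the assertion.

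First I would compute the replication numbers. For a point $p\in V$, symmetry of $(V,\mathcal{B})$ gives replication number equal to the block size $k$, so $p$ lies in exactly $k$ blocks of $\mathcal{B}$; since adjoining $\infty$ to certain blocks does not alter membership of $p$, the point $p$ still lies in exactly $k$ blocks of $\mathcal{B}'$. The new point $\infty$ lies by construction in precisely the $k$ (distinct) blocks obtained from $\mathbf{b}_{1},\ldots,\mathbf{b}_{k}$, so its replication number is also $k$. Hence every replication number equals $k$. Next I would compute pairwise block intersections. Let $B',C'$ be distinct blocks of $\mathcal{B}'$ arising from blocks $B,C\in\mathcal{B}$, which satisfy $|B\cap C|=\lambda$ since any two distinct blocks of a symmetric design meet in $\lambda$ points. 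If at most one of $B,C$ was modified, then $\infty$ lies in at most one of $B',C'$ and $|B'\cap C'|=|B\cap C|=\lambda$; if both were modified, then $\infty\in B'\cap C'$ and $|B'\cap C'|=\lambda+1$. Thus the only intersection numbers are $\lambda$ and $\lambda+1$.

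It remains to observe that both values genuinely occur, which is the only point requiring any care (so that the dual is a true adesign and not an ordinary $2$-design): the value $\lambda+1$ arises from any pair among $\mathbf{b}_{1},\ldots,\mathbf{b}_{k}$, which exists because $k\ge 2$ for a symmetric $2$-design, while $\lambda$ arises from any pair of unmodified blocks (or a modified–unmodified pair), which exists because $v>k$. One should also note that the $\mathbf{b}_{i}$ are taken distinct, which is what makes $\infty$ have replication number exactly $k$. With the hypotheses of Lemma~\ref{le559} now verified, the conclusion follows immediately, and I expect no substantive obstacle beyond this bookkeeping.
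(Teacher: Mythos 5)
Your proof is correct and follows essentially the same route as the paper's: verify that all replication numbers of the modified structure equal $k$ and that the block intersection numbers are exactly $\lambda$ and $\lambda+1$, then invoke Lemma~\ref{le559}. You simply spell out the details (including that both intersection values actually occur) that the paper leaves implicit.
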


\begin{proof} The replication numbers in the incidence structure $(V,\mathcal{B}')$ are all $k$, and the intersection numbers among pairs of blocks in $\mathcal{B}'$ are $\lambda$ and $\lambda+1$. The result follows from Lemma \ref{le559}.
\end{proof}

Note that the number of times which Lemma \ref{le560} can be applied to any given symmetric $2$-$(v,k,\lambda)$ design is $\lfloor \frac{v}{k} \rfloor$.

The following theorem gives another construction.
\begin{theorem}\label{th999} Let $(V,\mathcal{B})$ be a symmetric $2$-$(v,k,\lambda)$ design. Let $\mathbf{b}=\{b_{1},...,b_{k}\}$ be a block. Suppose that $\mathbf{b}_{1},...,\mathbf{b}_{k}$ are $k$ blocks not equal to $\mathbf{b}$ such that \begin{enumerate}
\item $b_{i} \not\in \mathbf{b}_{i}$ for all $i,1\leq i \leq k$, and such that
\item $b_{j} \in \mathbf{b}_{l}$ implies $b_{l} \not\in \mathbf{b}_{j}$ for all $j \neq l,1\leq j,l\leq k$.\end{enumerate}
Let $\mathcal{B}'$ denote the blocks of $\mathcal{B}$ modified by adjoining the point $b_{i}$ to the block $\mathbf{b}_{i}$ for all $i,1\leq i \leq k$, and then removing the block $\mathbf{b}$. Then $(V,\mathcal{B}')^{\perp}$ is a $2$-$(v,k,\lambda)$ adesign.
\end{theorem}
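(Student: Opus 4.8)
The plan is to verify that the incidence structure $(V,\mathcal{B}')$ meets the two hypotheses of Lemma \ref{le559}: that all of its replication numbers equal the constant $k$, and that the intersection numbers of distinct blocks of $\mathcal{B}'$ take only the values $\lambda$ and $\lambda+1$. Granting these, Lemma \ref{le559} immediately gives that $(V,\mathcal{B}')^{\perp}$ is a $2$-adesign with block size $k$ and smaller level $\lambda$. Throughout I will use two standard properties of the symmetric $2$-$(v,k,\lambda)$ design $(V,\mathcal{B})$ recalled in Section \ref{sec1}: every point lies in exactly $r=k$ blocks, and any two distinct blocks meet in exactly $\lambda$ points.

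First I would compute the replication numbers in $(V,\mathcal{B}')$. A point $p\in V\setminus\mathbf{b}$ is never one of the adjoined points $b_1,\dots,b_k$ and does not lie in the deleted block $\mathbf{b}$, so neither operation alters the set of blocks through $p$ and its replication number remains $k$. For a point $b_i\in\mathbf{b}$, hypothesis (1) gives $b_i\notin\mathbf{b}_i$, so adjoining $b_i$ to $\mathbf{b}_i$ genuinely increases the number of blocks containing $b_i$ by exactly one, raising its replication number from $k$ to $k+1$; deleting $\mathbf{b}$, which contains $b_i$, then restores it to $k$. Hence every replication number equals $k$. This also explains the role of deleting $\mathbf{b}$: it is the single operation cancelling the $+1$ produced by each adjunction.

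Next I would analyze the block intersection numbers, splitting into three cases by how many of the two blocks were modified. If neither was modified, their intersection is the original value $\lambda$. If exactly one, say $\mathbf{b}_i\cup\{b_i\}$, was modified and the other block $C$ was not, then because $b_i\notin\mathbf{b}_i$ the adjoined point is genuinely new, so the intersection equals $\lambda$ when $b_i\notin C$ and $\lambda+1$ when $b_i\in C$. The crux is two modified blocks $\mathbf{b}_i\cup\{b_i\}$ and $\mathbf{b}_j\cup\{b_j\}$ with $i\neq j$: expanding the intersection and using $b_i\neq b_j$ together with hypothesis (1) to see that the adjoined points do not already lie in $\mathbf{b}_i\cap\mathbf{b}_j$, one finds the intersection equals $\lambda$ plus the number of the two relations $b_i\in\mathbf{b}_j$ and $b_j\in\mathbf{b}_i$ that hold. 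Here hypothesis (2) is precisely what is required: it forbids both relations from holding at once, so at most one does and the intersection is again $\lambda$ or $\lambda+1$.

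I expect this last case to be the main obstacle, specifically the bookkeeping of the four-term expansion of $(\mathbf{b}_i\cup\{b_i\})\cap(\mathbf{b}_j\cup\{b_j\})$: one must confirm, via hypothesis (1) and $b_i\neq b_j$, that the pieces being counted are pairwise disjoint, so that no point is double-counted, before hypothesis (2) can be invoked to rule out the value $\lambda+2$. With both hypotheses of Lemma \ref{le559} thereby verified, the conclusion follows.
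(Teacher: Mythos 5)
Your proposal is correct and follows the paper's own route exactly: verify that $(V,\mathcal{B}')$ has constant replication number $k$ and block intersection numbers $\lambda$ or $\lambda+1$, then invoke Lemma \ref{le559}. The paper's proof asserts these two facts with essentially no detail, whereas you supply the case analysis (in particular the four-term expansion for two modified blocks, where hypothesis (1) guarantees no double counting and hypothesis (2) rules out $\lambda+2$), so your write-up is a fleshed-out version of the same argument.
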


\begin{proof} It is easy to see that the replication numbers of $(V,\mathcal{B}')$ are all $k$. The second condition in the statement ensures that the intersection numbers among pairs of blocks of $\mathcal{B}'$ are either $\lambda$ or $\lambda+1$. The result then follows from Lemma \ref{le559}.
\end{proof}

Next, we show how to construct almost difference sets from planar difference sets. The following constructions are not optimal but, for certain dimensions, give the best known value for $d_{1}$. A $(v,k,\lambda)$ difference set is called planar if $\lambda=1$. It is easy to show that, given a planar difference set $D$ in an (additive) Abelian group $G$ of order $v$, if we choose any $a_{0}\in G-D$ such that $2a_{0}$ cannot be written as the sum of two distinct members of $D$, then $D\cup \{a_{0}\}$ will be an almost difference set with $\lambda=1$. This is simply due to the fact that, because of the way we chose $a_{0}$, we cannot have $a_{0}-a=b-a_{0}$ for any $a,b \in D$, thereby forcing each member of $G$ to appear as a difference of two distinct members of $D\cup \{a_{0}\}$ only one or two times.
\par
Again, let $D$ be a $(v,k,1)$ difference set in an Abelian group $G$ of order $v$. Also let $\kappa : G\rightarrow \mathbb{Z}_{2} \times G$ by $x \mapsto (0,x)$. Suppose $a_{0},...,a_{s-1}\in G$ are such that the differences $(1,\tau)$ in $\kappa(D)\cup\{(1,a_{0}),...,(1,a_{s-1})\}$ cover $\{1\}\times G$ each having multiplicity at most $2$, that exactly one of the $a_{i}$s is a member of $D$, and twice any $a_{i}$ is not the sum of two other distinct $a_{i}$s. If there is at least one difference in $\kappa(D)\cup\{(1,a_{0}),...,(1,a_{s-1})\}$ having multiplicity $1$, then since the difference $(1,0)$ occurs exactly twice (because exactly one of the $a_{i}$s is in $D$), we have both $1$ and $2$ occurring as multiplicities. No difference can occur with multiplicity greater than $2$ since $G$ is planar and twice any $a_{i}$ is not the sum of two other distinct $a_{i}$s.  We also have the differences in $\kappa(D)\cup\{(1,a_{0}),...,(1,a_{s-1})\}$ covering $\mathbb{Z}_{2}\times G$: the differences $(0,\tau)$ cover $\{0\}\times G$ due to $G$ being a planar difference set and we have assumed that the differences $(1,\tau)$ cover $\{1\}\times G$. This discussion is summarized in the following.
\begin{theorem}\label{th61} Let $D$ be a $(v,k,1)$ difference set in an (additive) Abelian group $G$. Suppose $a_{0},...,a_{s-1}\in G$ are such that the differences $(1,\tau)$ in $\kappa(D)\cup\{(1,a_{0}),...,(1,a_{s-1})\}$ cover $\{1\}\times G$ each having multiplicity at most $2$, that exactly one of the $a_{i}$s is a member of $D$, and twice any $a_{i}$ is not the sum of two other distinct $a_{i}$s. If there is at least one difference in $\kappa(D)\cup\{(1,a_{0}),...,(1,a_{s-1})\}$ having multiplicity $1$ then $\kappa(D)\cup\{(1,a_{0}),...,(1,a_{s-1})\}$ is a $(2v,k+s,1,t)$ almost difference set in $\mathbb{Z}_{2}\times G$. The resulting symmetric $2$-adesign $(\mathbb{Z}_{2}\times G,$ Dev$(\kappa(D)\cup\{(1,a_{0}),...,(1,a_{s-1})\}))$ has parameters $(2v,k+s,1)$.
\end{theorem}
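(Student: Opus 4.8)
The plan is to verify directly the defining property of a $(2v,k+s,1,t)$ almost difference set for the set $E:=\kappa(D)\cup\{(1,a_{0}),\dots,(1,a_{s-1})\}$ in $\mathbb{Z}_{2}\times G$: namely, that in the difference multiset of $E$ every nonzero element of $\mathbb{Z}_{2}\times G$ occurs exactly once or exactly twice, with both multiplicities realized. First I would record the bookkeeping. Since every member of $\kappa(D)$ has first coordinate $0$ and every $(1,a_{i})$ has first coordinate $1$, the two pieces are disjoint and (the $a_{i}$ being distinct) $|E|=k+s$, while $|\mathbb{Z}_{2}\times G|=2v$. The key organizing idea is to classify a difference $(\varepsilon_{1},g_{1})-(\varepsilon_{2},g_{2})=(\varepsilon_{1}+\varepsilon_{2},\,g_{1}-g_{2})$ by its $\mathbb{Z}_{2}$-coordinate, which is $0$ precisely when the two chosen members lie in the same piece and $1$ precisely when they lie in opposite pieces.

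Next I would dispose of the layer $\{1\}\times G$. A difference landing there comes only from a pair consisting of one $(0,d)\in\kappa(D)$ and one $(1,a_{i})$, contributing the two values $(1,d-a_{i})$ and $(1,a_{i}-d)$. The first standing assumption is exactly the statement that these cover $\{1\}\times G$ with each element having multiplicity at most $2$, so every element of this layer occurs at least once and at most twice. The one case needing care is $(1,0)$, which forces $d=a_{i}$, i.e.\ $a_{i}\in D$; by the second assumption there is exactly one such index, and the corresponding pair produces $(0,d)-(1,a_{i})$ and $(1,a_{i})-(0,d)$, both equal to $(1,0)$, so $(1,0)$ has multiplicity exactly $2$.

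Then I would treat the layer $\{0\}\times(G\setminus\{0\})$. A difference landing there is either $(0,d-d')$ with $d,d'\in D$ distinct, or $(0,a_{i}-a_{j})$ with $i\neq j$. Because $D$ is planar, the first source contributes each $(0,\gamma)$ with $\gamma\neq0$ exactly once, so the entire problem reduces to controlling the intra-$a$ differences, and this is the step I expect to be the main obstacle. I must show that no $(0,\gamma)$ picks up two or more contributions from the $a$-part, for otherwise the planar contribution would push its multiplicity past $2$. Two ordered pairs realizing the same difference $a_{i}-a_{j}=a_{i'}-a_{j'}$ cannot agree in their first or in their second index without forcing two of the $a$'s to coincide; the dangerous overlaps therefore take the shape $2a_{x}=a_{j}+a_{k}$ with $a_{j},a_{k}$ distinct, which is precisely what the third assumption excludes. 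Establishing that this assumption, together with the planarity of $G$, really does cap every such multiplicity at $2$ is the technical heart of the argument.

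Combining the two layers, every nonzero element of $\mathbb{Z}_{2}\times G$ occurs once or twice as a difference of distinct members of $E$. The standing hypothesis that at least one difference has multiplicity $1$ shows the multiplicity-$1$ class is nonempty, while $(1,0)$ shows the multiplicity-$2$ class is nonempty, so $E$ is a genuine $(2v,k+s,1,t)$ almost difference set for the resulting value of $t$. Finally, the claim about the design is immediate from the corollary to Theorem~\ref{th1}: applied to the almost difference set $E\subseteq\mathbb{Z}_{2}\times G$ it gives that $(\mathbb{Z}_{2}\times G,\ \mathrm{Dev}\,E)$ is a $2$-$(2v,k+s,1)$ adesign, which is symmetric by Theorem~\ref{th1}.
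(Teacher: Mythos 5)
Your proposal follows essentially the same route as the paper, whose proof is the discussion immediately preceding the theorem: split the difference multiset by the $\mathbb{Z}_{2}$-coordinate, use the covering hypothesis to control the layer $\{1\}\times G$ (with $(1,0)$ achieving multiplicity $2$ because exactly one $a_{i}$ lies in $D$), use planarity of $D$ plus the condition that $2a_{i}$ is never a sum of two other distinct $a_{j}$'s to control the layer $\{0\}\times(G\setminus\{0\})$, and invoke the hypothesis that some difference has multiplicity $1$ to see both levels occur. The step you single out as the technical heart --- reducing a repeated $a$-difference $a_{i}-a_{j}=a_{i'}-a_{j'}$ to a relation of the form $2a_{x}=a_{j}+a_{k}$ --- is exactly the reduction the paper itself relies on, so your write-up is faithful to (and no less complete than) the published argument.
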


\begin{example}\label{ex1} Consider the Singer difference set $\{1,2,4\}$ in $\mathbb{Z}_{7}$. With $a_{0}=0$ we have $2a_{0}$ is not the sum of two distinct members of $D$, and $\kappa(D)\cup\{(1,0)\}$ is a $(14,4,0,1)$ almost difference set in $\mathbb{Z}_{14}$. With $a_{1}=1$ we have $\kappa(D)\cup\{(1,0),(1,1)\}$ is a $(14,5,1,6)$ almost difference set in $\mathbb{Z}_{14}$
\end{example}

\begin{example} Consider the Singer difference set $D=\{0,1,5,11\}$ in $\mathbb{Z}_{13}$. With $a_{0}=10$, we have $2a_{0}$ is not the sum of two distinct members of $D$, and it is easily checked that $\kappa(D)\cup \{(1,10)\}$ is a $(26,5,0,5)$ almost difference set in $\mathbb{Z}_{26}$. With $a_{1}=11$ we have that $\kappa(D)\cup\{(1,a_{0}),(1,a_{1})\}$ is a $(26,6,1,11)$ almost difference set.
\end{example}

\begin{example} Now consider the Singer difference set $D=\{0,3,13,15,20\}$ in $\mathbb{Z}_{21}$. We have $\{9,13,16\}$ are such that the differences $(1,\tau)$ cover $\{1\}\times \mathbb{Z}_{21}$ with multiplicities no more than $2$ and that $13$ is the only member that is also in $D$. It is also easy to see that the difference $(1,9)$ can only occur as the difference $(1,9)-(0,0)$. Thus we have $\kappa(D)\cup \{(1,9),(1,13),(1,16)\}$ is a $(42,8,1,16)$ almost difference set.
\end{example}

\section{Constructions of $3$-adesigns}\label{sec5}

In this section we will give two constructions each of which produce infinitely many $3$-adesigns.

Our first constructions makes use of quadratic residues.

\begin{theorem}\label{th561} Let $q\equiv3($mod $4)$ be an odd prime power. Then
$(\mathbb{F}_{q},DevD_{0}^{2}\cup DevD_{1}^{2})$ is a $3$-$(q,\frac{q-1}{2},\frac{q-7}{4})$ adesign.
\end{theorem}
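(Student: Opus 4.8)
The plan is to count, for each $3$-subset $\{x,y,z\}\subseteq\mathbb{F}_q$, the number $N$ of blocks of $\mathrm{Dev}D_0^2\cup\mathrm{Dev}D_1^2$ containing it, and to show $N$ takes exactly the two values $\frac{q-7}{4}$ and $\frac{q-3}{4}=\frac{q-7}{4}+1$, both actually attained. Let $\chi$ be the quadratic character of $\mathbb{F}_q$ (with $\chi(0)=0$), so $D_0^2=\{u:\chi(u)=1\}$ and $D_1^2=\{u:\chi(u)=-1\}$; since $q\equiv 3\pmod 4$ we have $\chi(-1)=-1$. A translate $D_i^2+g$ contains $\{x,y,z\}$ exactly when $x-g,y-g,z-g$ all lie in $D_i^2$, which in particular forces $g\notin\{x,y,z\}$. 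For such $g$, writing $a=\chi(x-g)$, $b=\chi(y-g)$, $c=\chi(z-g)\in\{\pm1\}$, the number of $i\in\{0,1\}$ with $D_i^2+g\supseteq\{x,y,z\}$ is $\frac{(1+a)(1+b)(1+c)+(1-a)(1-b)(1-c)}{8}=\frac{1+ab+ac+bc}{4}$. Summing over the $q-3$ admissible $g$ gives $N=\frac{q-3}{4}+\frac14\bigl(S_{xy}+S_{xz}+S_{yz}\bigr)$, where $S_{xy}=\sum_{g\neq x,y,z}\chi(x-g)\chi(y-g)$, and similarly for the other two.

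Next I would evaluate each $S$ using the standard identity $\sum_{g\in\mathbb{F}_q}\chi\bigl((g-\alpha)(g-\beta)\bigr)=-1$ for $\alpha\neq\beta$ (obtained from the substitution $g\mapsto\alpha+h$ with $h$ ranging over $\mathbb{F}_q^\ast$). Since the terms at the two coinciding arguments vanish, each $S$ equals $-1$ minus the single omitted term; for instance $S_{xy}=-1-\chi\bigl((x-z)(y-z)\bigr)$. Setting $u=x-y$, $v=y-z$, $w=z-x$ (so $u+v+w=0$ and each is nonzero) and using $\chi(-1)=-1$ to rewrite the omitted terms, one gets $S_{xy}+S_{xz}+S_{yz}=-3+\chi(uv)+\chi(uw)+\chi(vw)$, hence $N=\frac{q-6+\chi(uv)+\chi(uw)+\chi(vw)}{4}$. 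Because $\chi(uv)\chi(uw)\chi(vw)=\chi\bigl((uvw)^2\bigr)=1$, the three signs have product $+1$, so their sum is $3$ or $-1$. This yields $N\in\{\frac{q-3}{4},\frac{q-7}{4}\}$, two integers (both divisibility conditions hold since $q\equiv3\pmod4$) differing by exactly $1$.

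Finally I must verify both values occur, since the definition of a $3$-adesign demands it. The pattern $(\chi(uv),\chi(uw),\chi(vw))$ is invariant under the scaling $(u,v,w)\mapsto(\lambda u,\lambda v,\lambda w)$ and under affine maps of $\{x,y,z\}$, so it suffices to count, among $\lambda\in\mathbb{F}_q\setminus\{0,-1\}$ with $\mu=-1-\lambda$, those for which $\lambda,\mu\in D_0^2$ (giving sum $3$, i.e.\ $N=\frac{q-3}{4}$) versus the rest (giving $N=\frac{q-7}{4}$). A short character computation, using $\sum_{\lambda}\chi(\lambda)=0$ and $\sum_\lambda\chi(-\lambda^2-\lambda)=-\chi(-1)=1$, shows the first count is $\frac{q+1}{4}$ and the second $\frac{3q-9}{4}$, both positive for the relevant $q$; thus each level is realized. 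Combining everything, the $3$-levels are $\frac{q-7}{4}<\frac{q-3}{4}$ with difference $1$, so $(\mathbb{F}_q,\mathrm{Dev}D_0^2\cup\mathrm{Dev}D_1^2)$ is a $3$-$(q,\frac{q-1}{2},\frac{q-7}{4})$ adesign. I expect the main obstacle to be the sign bookkeeping in reducing the $S$-sums (where $\chi(-1)=-1$ must be tracked throughout) together with this nonvacuity argument; the core character-sum evaluation is otherwise routine.
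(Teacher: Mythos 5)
Your proof is correct, and it takes a genuinely different route from the paper's. The paper never touches character sums: it exploits the fact that $(\mathbb{F}_{q},\mathrm{Dev}D_{0}^{2})$ is the Paley $2$-$(q,\frac{q-1}{2},\frac{q-3}{4})$ design, so by inclusion--exclusion the number of blocks of $\mathrm{Dev}D_{0}^{2}\cup\mathrm{Dev}(D_{1}^{2}\cup\{0\})$ through any triple is the constant $\frac{q-3}{4}$; it then observes that passing from $D_{1}^{2}\cup\{0\}+g$ to $D_{1}^{2}+g$ deletes only the point $g$ from each such block, and that at most one of the three relevant deletions (at $g=x,y,z$) can destroy an incidence because $x-y$ and $y-x$ cannot both lie in $D_{1}^{2}$ when $q\equiv 3\ (\mathrm{mod}\ 4)$; non-constancy of the level is then forced by a divisibility contradiction with the $3$-design block-count formula. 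Your argument instead computes the count exactly as $N=\frac{q-6+\chi(uv)+\chi(uw)+\chi(vw)}{4}$ via the standard evaluation of $\sum_{g}\chi((g-\alpha)(g-\beta))$, and I checked the sign bookkeeping: with $u+v+w=0$ and $\chi(-1)=-1$ each omitted term contributes $+\chi(\cdot)$, the product of the three signs is $\chi((uvw)^{2})=1$, and the distribution $\frac{q+1}{4}$ versus $\frac{3q-9}{4}$ of normalized patterns is right (e.g.\ for $q=11$ it reproduces $55$ triples at level $2$ and $110$ at level $1$, consistent with $22$ blocks of size $5$). What your approach buys is strictly more information --- an explicit criterion for which triples sit at which level and the exact count of each, which also settles nonvacuity constructively rather than by contradiction; what the paper's approach buys is brevity and independence from character-sum machinery, reusing only the known $2$-design property of the quadratic residues. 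The one point both treatments leave implicit is that all $2q$ translates in $\mathrm{Dev}D_{0}^{2}\cup\mathrm{Dev}D_{1}^{2}$ are distinct blocks, which is needed for the sum over $g$ to count blocks without multiplicity; this is immediate from $|D_{i}^{2}\cap(D_{i}^{2}+t)|<|D_{i}^{2}|$ for $t\neq 0$, but it is worth a sentence.
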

\begin{proof}
Denote $\frac{q-1}{2}$ by $k$ and $\frac{q-3}{4}$ by $\lambda'$. Let $x,y,z\in \mathbb{F}_{q}$ be arbitrary. To count the number of blocks in which $x,y,z$ appear together, we first count the number of blocks of $DevD_{0}^{2}\cup Dev(D_{1}^{2}\cup \{0\})$ in which $x,y,z$ appear together. Suppose that the three points $x,y,z$ appear in $\mu$ blocks in $DevD_{0}^{2}$. Using the fact that $(\mathbb{F}_{q},DevD_{0}^{2})$ is a $2$-$(q,k,\lambda')$ design, a simple counting argument gives that there are $q-3k+3\lambda'-\mu$ blocks in $Dev\overline{D_{0}^{2}}:=Dev(D_{1}^{2}\cup\{0\})$ containing $x,y,z$. Thus, there are $q-3k+3\lambda'=\lambda'$ blocks in $DevD_{0}^{2}\cup Dev\overline{D_{0}^{2}}$ containing $x,y,z$. Since $w\in D_{1}^{2}\cup\{0\}+w$ for all $w\in \mathbb{F}_{q}$, we want to know how many of the $q-3k+3\lambda'-\mu$ blocks in $Dev\overline{D_{0}^{2}}$ are also in $\{\overline{D_{0}^{2}}+x,\overline{D_{0}^{2}}+y,\overline{D_{0}^{2}}+z\}$. Without loss of generality suppose that both $\overline{D_{0}^{2}}+x$ and $\overline{D_{0}^{2}}+y$ contain the three points $x,y,z$. Then we must have $y-x,z-x \not\in D_{0}^{2}$ and $x-y,z-y \not\in D_{0}^{2}$. But this would imply that $x-y,y-x \in D_{1}^{2}$ where both $x-y$ and $y-x$ are nonzero. But this is impossible as the additive inverse of any member of $D_{1}^{2}$ cannot also be a member whenever $q\equiv3($mod $4)$. Then no more than one of the blocks $\overline{D_{0}^{2}}+x,\overline{D_{0}^{2}}+y,\overline{D_{0}^{2}}+z$ can contain all three of $x,y,z$. We now need to show that there are two different $3$-levels, i.e. that $(\mathbb{F}_{q},DevD_{0}^{2}\cup DevD_{1}^{2})$ is not a $3$-design, but a $3$-adesign. To show this we assume that $(\mathbb{F}_{q},DevD_{0}^{2}\cup DevD_{1}^{2})$ is a $3$-$(q,k,\lambda)$-design for some $\lambda$. Then the number of blocks must be given by $\lambda\frac{\binom{q}{3}}{\binom{k}{3}}$. The only choices for $\lambda$ are $\lambda'$ or $\lambda'-1$. If $\lambda=\lambda'$ then we get that $q-5=q-4$. If $\lambda=\lambda'-1$ then we get that $(q-3)(q-5)=(q-7)(q-2)$. Either way we get a contradiction, which completes the proof.\end{proof}

\begin{example}\label{ex562} With $q=11$ we apply Theorem \ref{th561} and get that $(\mathbb{Z}_{11},DevD_{0}^{2}\cup DevD_{1}^{2})$ is a $3$-$(11,5,1)$ adesign with blocks:
\begin{center}
\begin{tabular}{cccccc}
$\{1,3,4,5,9\}$&$\{2,4,5,6,10\}$&$\{0,3,5,6,7\}$&$\{1,4,6,7,8\}$&$\{2,5,7,8,9\}$&$\{0,4,5,6,8\}$ \\
$\{3,6,8,9,10\}$&$\{0,4,7,9,10\}$&$\{0,1,5,8,10\}$&$\{0,1,2,6,9\}$&$\{1,2,3,7,10\}$&$\{1,5,6,7,9\}$ \\
$\{0,2,3,4,8\}$&$\{2,6,7,8,10\}$&$\{0,3,7,8,9\}$&$\{1,4,8,9,10\}$&$\{0,2,5,9,10\}$& \\
$\{0,1,3,6,10\}$&$\{0,1,2,4,7\}$&$\{1,2,3,5,8\}$&$\{2,3,4,6,9\}$&$\{3,4,5,7,10\}$& \\
\end{tabular}\end{center}
\end{example}

Our second construction is related to graphs, though it is simple enough to avoid graph-theoretical preliminaries.

\begin{theorem}\label{th564}
Let $n$ $(\geq 7)$ be an odd integer not divisible by 3. Consider, for fixed $a\in\mathbb{Z}_{n}$, all pairs $\lbrace a-i\pmod n, a+i\pmod n \rbrace$ for $i=1,\cdots,\frac{n-1}{2}$. The union of any two distinct pairs gives a block consisting of four points. Denote, for fixed $a\in\mathbb{Z}_{n}$, the set of all blocks obtained in this way by $B_{a}$. Then $(\mathbb{Z}_{n},\cup_{a\in\mathbb{Z}_{n}}B_{a})$ is a $3$-$(n,4,2)$ adesign.
\end{theorem}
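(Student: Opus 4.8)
The plan is to count, for an arbitrary $3$-subset $T=\{x,y,z\}\subseteq\mathbb{Z}_n$, exactly how many blocks of $\cup_{a\in\mathbb{Z}_n}B_a$ contain $T$, and to show this count is always $2$ or $3$, with both values attained. The starting observation is structural: since $n$ is odd, for fixed $a$ the pairs $\{a-i,a+i\}$ with $1\le i\le\frac{n-1}{2}$ are disjoint and partition $\mathbb{Z}_n\setminus\{a\}$, so every block of $B_a$ is a union of two distinct such pairs and hence a genuine $4$-subset. The decisive feature is that the two elements of each pair sum to $2a$; consequently a $4$-subset $S$ lies in $B_a$ precisely when $S$ splits into two pairs each of sum $2a$, equivalently when $S$ is invariant under the reflection $p\mapsto 2a-p$ with $a\notin S$.

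First I would fix $T=\{x,y,z\}$ and determine which blocks contain it. Any block containing $T$ has the form $T\cup\{u\}$ and lies in some $B_a$; since a block is the union of only two reflection-pairs while $T$ has three points, two of $x,y,z$ must form a single pair (so their sum equals $2a$) and the third is paired with $u$. This yields exactly three candidates, indexed by which two points are paired:
\[
(a_1,u_1)=\Big(\tfrac{x+y}{2},\,x+y-z\Big),\quad (a_2,u_2)=\Big(\tfrac{x+z}{2},\,x+z-y\Big),\quad (a_3,u_3)=\Big(\tfrac{y+z}{2},\,y+z-x\Big),
\]
where halving is legitimate because $n$ is odd. A direct check shows the centers $a_1,a_2,a_3$ are pairwise distinct and the fourth-points $u_1,u_2,u_3$ are pairwise distinct, and that a center $a$ with $2a\notin\{x+y,x+z,y+z\}$ places $x,y,z$ in three different reflection-pairs, which two pairs cannot cover; so no block containing $T$ has a center outside $\{a_1,a_2,a_3\}$. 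Thus the number of blocks containing $T$ equals the number of indices $i$ for which the candidate is admissible, i.e. for which $u_i\notin\{x,y,z\}$.

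Next I would pin down when a candidate is lost. One checks that $u_1\in T$ iff $x+y=2z$, $u_2\in T$ iff $x+z=2y$, and $u_3\in T$ iff $y+z=2x$; that is, a candidate fails exactly when $T$ is a three-term arithmetic progression in some order. The key arithmetic input enters here: using $3\nmid n$, so that $3$ is invertible in $\mathbb{Z}_n$, at most one of these three conditions can hold, since any two of them force two of $x,y,z$ to coincide. Hence if $T$ is an AP exactly one candidate is lost and $T$ lies in $2$ blocks, while if $T$ is not an AP all three candidates survive and $T$ lies in $3$ blocks. Because $n\ge 7$ admits both AP triples (e.g. $\{0,1,2\}$) and non-AP triples (e.g. $\{0,1,3\}$), the two $3$-levels $2$ and $3$ are both attained and differ by $1$, which (together with block size $4$ and $n>4>3$) gives a $3$-$(n,4,2)$ adesign.

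The main obstacle is the bookkeeping in the second paragraph: verifying that the three candidate blocks are genuinely distinct and that every block on $T$ is realized by exactly one of the centers $a_1,a_2,a_3$, so that the count is neither inflated nor deflated and agrees whether $\cup_{a}B_a$ is read as a set or a multiset. Everything else reduces to the two elementary facts that $2$ and $3$ are invertible modulo $n$, which is precisely the hypothesis that $n$ is odd and not divisible by $3$.
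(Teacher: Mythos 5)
Your proof is correct and takes essentially the same route as the paper's: both arguments classify the blocks through a triple $\{x,y,z\}$ according to which two of the three points form a reflection pair about the centre $a$, and both use $3\nmid n$ to rule out degenerate coincidences, yielding the levels $2$ (when the triple is an arithmetic progression) and $3$ (otherwise). Your algebraic formulation via midpoints $(x+y)/2$ and the fourth points $x+y-z$ is just a rewriting of the paper's geometric argument with the circular distances $d_{xy},d_{xz},d_{yz}$, though you are somewhat more careful about verifying that the three candidate blocks are distinct and that both levels are actually attained.
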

\begin{proof}
Arrange all the points in a circle as is shown in the graph below. For any three points $x, y, z \in \mathbb{Z}_{n}$, denote $|x-y|$, $|x-z|$, $|y-z|$ by $d_{xy}$, $d_{xz}$, $d_{yz}$ respectively.

Since $n$ is not divisible by $3$, $d_{xy}=d_{xz}=d_{yz}$ cannot happen. Then suppose two of them are equal. Without loss of generality, suppose $d_{xz}=d_{yz}$. Then when $x$ and $y$ are in a pair, $z$ must be the fixed point so that there is no block containing all three of $x,y$ and $z$. When $x$ and $z$ are in a pair or $y$ and $z$ are in pair, we can find exactly one block containing the three points in each case. If $d_{xy},d_{xz}$ and $d_{yz}$ are distinct, then we can find one block containing these three points when any two points are in pair, in which case we have three blocks containing these three points together.
\begin{center}
\begin{tikzpicture}
%\label{graph1}
\draw  (0,0) circle [radius=2];
\draw [fill] (0, 0) circle [radius=0.05];
\draw [fill] (-1.6,1.2) circle [radius=0.05];
\draw [fill] (1.6,1.2) circle [radius=0.05];
\draw [fill] (0,-2) circle [radius=0.05];
\draw [fill] (0,2) circle [radius=0.02];
\draw [fill] (0.5,1.94) circle [radius=0.02];
\draw [fill] (-0.5,1.94) circle [radius=0.02];
\draw [fill] (1,1.73) circle [radius=0.02];

\draw [-] (-1.6,1.2) -- (1.6,1.2);
\draw [-] (-1.6,1.2) -- (0,-2);
\draw [-] (0,-2) -- (1.6,1.2);

\node [left] at (-1.8,1.2) {$x$};
\node [left] at (2.2,1.2) {$y$};
\node [below] at (0,-2.2) {$z$};
\node [above] at (0,2) {\tiny$0$};
\node [above] at (0.5,1.94) {\tiny$1$};
\node [above,rotate=13] at (-0.5,1.94) {\tiny$n-1$};
\node [above] at (1,1.73) {\tiny$2$};
\node [right,rotate=138] at (1.65,1.45) {\small$\cdots$};

\node [above] at (0,1.2) {$d_{xy}$};
\node [left] at (-0.9,0) {$d_{xz}$};
\node [right] at (1,0) {$d_{yz}$};
\end{tikzpicture}
\end{center}
\end{proof}

\begin{example} With $n=7$ we apply Theorem \ref{th564} and get that $(\mathbb{Z}_{7},\cup_{a\in\mathbb{Z}_{7}}B_{a})$ is a $3$-$(7,4,2)$ adesign with blocks:
\begin{center}
\begin{tabular}{ccccccc}
$\{1,7,2,6\}$&$\{1,7,3,5\}$&$\{2,6,3,5\}$&$\{7,6,1,5\}$&$\{7,6,2,4\}$&$\{1,5,2,4\}$&$\{1,4,2,3\}$ \\
$\{1,3,7,4\}$&$\{1,3,6,5\}$&$\{7,4,6,5\}$&$\{7,2,6,3\}$&$\{7,2,5,4\}$&$\{6,3,5,4\}$&$\{1,2,7,3\}$ \\
$\{1,6,2,5\}$&$\{1,6,3,4\}$&$\{2,5,3,4\}$&$\{7,5,1,4\}$&$\{7,5,2,3\}$&$\{7,3,6,4\}$&$\{1,2,6,4\}$
\end{tabular}\end{center}
\end{example}

Let $(V,\mathcal{B})$ be an incidence structure. Let $p\in V$, and define $\mathcal{B}_{p}=\{\mathcal{B}-\{p\}\mid\mathcal{B}\in\mathcal{B}\text{ and }p\in\mathcal{B}\}$. We call the incidence structure $(V-\{p\},\mathcal{B}_{p})$ the {\it contraction} of $(V,\mathcal{B})$ at $p$. It is clear that contracting at points of a $3$-adesign will give a $2$-adesign as long as not all $3$-sets of points occur in the same number of blocks of the contraction.

\begin{example}\label{ex563} The contraction at the point $p=1$ of the $3$-$(11,5,1)$ adesign in Example \ref{ex562} is a symmetric $2$-$(10,4,1)$ adesign with the ten blocks:
\begin{center}
\begin{tabular}{ccccc}
$\{3,4,5,9\}$&$\{4,6,7,8\}$&$\{0,5,8,10\}$&$\{0,2,6,9\}$&$\{2,3,7,10\}$ \\
$\{4,8,9,10\}$&$\{0,3,6,10\}$&$\{0,2,4,7\}$&$\{2,3,5,8\}$&$\{5,6,7,9\}$
\end{tabular}\end{center}
\end{example}

\begin{remark}\label{re564} Interestingly, a contraction at any point of the incidence structure $(\mathbb{F}_{q},DevD_{0}^{2}\cup DevD_{1}^{2})$ from Theorem \ref{th561} gives a symmetric $2$-$(q-1,\frac{q-3}{2},\frac{q-7}{4})$ adesign which is not the development of any almost difference set.
\end{remark}

We close this section with a table describing new $t$-$(v,k,\lambda)$ adesigns constructed in this paper for $v\leq18$.
\newpage
\begin{longtable}{|c|c|c|c|}
\caption{Parameters of new t-adesigns}\\
  \hline
  % after \\: \hline or \cline{col1-col2} \cline{col3-col4} ...
  $t$-adesign ref & $t$ & $(v,k,\lambda)$& no. of blocks \\
  \hline
  Theorem \ref{th557} & 2 & $(q,\frac{q-1}{2},\frac{q-5}{2})$ & $2q-2$ \\\hline
  Theorem \ref{th51} & 2 & $(\frac{p+1}{2},\frac{p-1}{4},\frac{p-9}{4})$ & $p-1$   \\\hline
  Theorem \ref{th52}$^{*}$ & 2 & $(\frac{p+1}{2},\frac{p+3}{4},\frac{p-5}{4})$ & $p-1$  \\\hline
  Theorem \ref{th555} & 2 & $(q,\frac{q-1}{2},\frac{3q-11}{4})$ & $3q$  \\\hline
  Theorem \ref{th556} & 2 & $(q,\frac{q-1}{4},\frac{q-7-2x}{8})$ & $2q$  \\\hline
  Theorem \ref{th558} & 2 & $(q,\frac{q-1}{2},\frac{3q-11}{4})$ & $3q$  \\\hline
  Remark \ref{re564}  & 2 & $(q-1,\frac{q-3}{2},\frac{q-7}{4})$ & $2q$\\\hline
  Theorem \ref{th561} & 3 & $(q,\frac{q-1}{2},\frac{q-7}{4})$ & $2q$   \\\hline
  Theorem \ref{th564} & 3 & $(n,4,2)$ & $\frac{n(n-1)(n-3)}{8}$  \\
  \hline
  \multicolumn{4}{l}{{\footnotesize * This $2$-adesign meets the packing bound given in Theorem \ref{th888}.}}\\
\end{longtable}

\section{Related Codes}\label{sec5.5}

A linear binary {\it code} $C$ of {\it length} $n$ and {\it dimension} $k$ (or simply an $\left[n,k\right]$ code), is a $k$-dimensional linear subspace of the $n$-dimensional binary vector space $\mathbb{F}_{2}^{n}$. The {\it dual} $C^{\perp}$ of an $\left[n,k\right]$ code $C$ is the $\left[n,n-k\right]$ code that is the orthogonal space of $C$ with respect to the inner product of the binary field. Any basis of $C$ is called a {\it generator matrix} of $C$, and any basis of $C^{\perp}$ is called a {\it parity check matrix} of $C$. The Hamming distance between two vectors $x=(x_{1},...,x_{n})$ and $y=(y_{1},...,y_{n})$ is the number of indices $i$ such that $x_{i} \neq y_{i}$. The Hamming {\it weight} of a vector is the number of its nonzero coordinates. The minimum distance $d$ of a code is smallest possible distance between pairs of distinct codewords. An $\left[n,k\right]$ code $C$ is {\it self-orthogonal} if $C \subseteq C^{\perp}$.
\par
In order to discuss the dimensions of some of the codes from adesigns we will need the following results concerning coverings and packings.
\par
Let $v$, $k$, $\lambda$ be positive integers and let $(V,\mathcal{B})$ be a pair where $V$ is a $v$-set of points and $\mathcal{B}$ is a collection of $k$-subsets of $V$, called blocks. If each pair of points occur together in at least $\lambda$ blocks, then $(V,\mathcal{B})$ is a $(v,k,\lambda)$-{\it covering}. If each pair of points occur together in at most $\lambda$ blocks, then $(V,\mathcal{B})$ is a $(v,k,\lambda)$-{\it packing}.

Denote the incidence matrix of a $(v,k,\lambda)$-covering resp. -packing by $M_{c}$ resp. $M_{p}$. Also note that if $b$ is the number of blocks in $\mathcal{B}$ then we always have \[ b\geq rank(M)\geq rank(MM^{T}), \] where $M$ is either of $M_{c}$ or $M_{p}$. Then we have the following two theorems (\cite{Covering}):
\begin{theorem}
Let $v$, $k$, $\lambda$ be positive integers such that $3 \leq k <v$, and let $r_{1}$ and $d_{1}$ be the integers such that $\lambda(v-1)=r_{1}(k-1)-d_{1}$ and $0\leq d_{1}<k-1$. If $d_{1}<r_{1}-\lambda$, then
\begin{equation}
rank(M_{c}M_{c}^{T})\geq\Big\lceil\frac{v(r_{1}+1)}{k+1}\Big\rceil.
\end{equation}
\end{theorem}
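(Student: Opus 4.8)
The plan is to work over $\mathbb{R}$, where the Gram matrix $s:=M_{c}M_{c}^{T}$ is positive semidefinite, and to bound $\operatorname{rank}(s)$ from below by controlling the spectrum of $s$. First I would record the entries of $s$: the $(x,x)$ entry is the replication number $r_{x}$ (the number of blocks through $x$), and for $x\neq y$ the $(x,y)$ entry is the concurrence $\lambda_{xy}=|\{B\in\mathcal{B}:x,y\in B\}|$, which satisfies $\lambda_{xy}\geq\lambda$ because $(V,\mathcal{B})$ is a covering. A local pair-count then pins down $r_{1}$: the $v-1$ pairs through a fixed point $x$ must each be covered at least $\lambda$ times, while each of the $r_{x}$ blocks through $x$ covers only $k-1$ of them, so $r_{x}(k-1)\geq\lambda(v-1)$ and hence $r_{x}\geq\lceil\lambda(v-1)/(k-1)\rceil=r_{1}$ for every $x$. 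The integer $d_{1}=r_{1}(k-1)-\lambda(v-1)$ is exactly the resulting per-point slack, and the hypothesis $d_{1}<r_{1}-\lambda$ will be used below; note it already forces $r_{1}>\lambda$, so the diagonal of $s$ strictly exceeds $\lambda$.

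The second step is to separate the easy regime. Writing $b=|\mathcal{B}|$ we have $bk=\sum_{x}r_{x}\geq vr_{1}$, together with the chain $b\geq\operatorname{rank}(M_{c})\geq\operatorname{rank}(s)$ already noted. When $b\geq v$ these combine at once: $b(k+1)=bk+b\geq vr_{1}+v=v(r_{1}+1)$, so the desired estimate holds for the block count $b$, which is what the rank--block chain is meant to control. The substantive case, in which an honest rank estimate is unavoidable, is $b<v$; since $bk\geq vr_{1}$ this forces $r_{1}<k$, so that the target $v(r_{1}+1)/(k+1)$ is itself at most $v$ and a lower bound on $\operatorname{rank}(s)$ can actually reach it.

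For that case I would decompose $s=\operatorname{diag}(r_{x}-\lambda)+\lambda J+E$, where $E$ is the nonnegative off-diagonal matrix $E_{xy}=\lambda_{xy}-\lambda\geq0$. The rank-one term $\lambda J$ supplies the Perron direction (close to $\mathbf{1}$), and the remaining inertia is governed on $\mathbf{1}^{\perp}$ by Cauchy interlacing applied to $PsP$, where $P=I-\tfrac1vJ$. The diagonal part $\operatorname{diag}(r_{x}-\lambda)$ has every entry $\geq r_{1}-\lambda>d_{1}\geq0$, and the hypothesis $d_{1}<r_{1}-\lambda$ is precisely the slack that keeps this diagonal floor strictly above the averaged off-diagonal contribution, so the nonnegative perturbation $E$ cannot annihilate too many eigenvalues. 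Counting the surviving positive eigenvalues of $PsP$, together with the Perron direction, is intended to give $\operatorname{rank}(s)\geq v(r_{1}+1)/(k+1)$, and the left side being an integer then upgrades this to the ceiling.

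The hard part will be the spectral count in the last step. The crude second-moment estimate $\operatorname{rank}(s)\geq(\operatorname{tr}s)^{2}/\operatorname{tr}(s^{2})$ is far too weak here: already for a symmetric $2$-design (the equality case, where $r_{1}=k$ and the bound is $v$) it falls well short of $v$, so one cannot avoid exploiting the structure of $s$ through interlacing or a weighted quadratic-form argument calibrated to the slack $d_{1}<r_{1}-\lambda$. I would therefore spend most of the effort making the eigenvalue-counting quantitative enough to extract the exact constant $(r_{1}+1)/(k+1)$, accounting for both ``$+1$''s, and I would verify the boundary by checking that symmetric designs, where $s=(k-\lambda)I+\lambda J$ is nonsingular, meet the bound with equality.
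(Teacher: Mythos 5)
The paper gives no proof of this statement --- it is quoted verbatim from the reference \cite{Covering} (it is the Fisher-type strengthening of the Sch\"onheim bound for coverings) --- so your attempt can only be judged on its own terms, and there it runs into trouble at the very first fork. In your ``easy regime'' $b\geq v$ you derive $b(k+1)\geq v(r_{1}+1)$ and declare victory because ``the rank--block chain is meant to control'' $b$; but the chain $b\geq \operatorname{rank}(M_{c})\geq \operatorname{rank}(M_{c}M_{c}^{T})$ points the wrong way, so a lower bound on $b$ gives no lower bound on $\operatorname{rank}(M_{c}M_{c}^{T})$ whatsoever. This is not a repairable gap: that regime contains outright counterexamples to the statement as printed. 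Take the Fano plane with every block repeated twice; this is a $(7,3,2)$-covering with $\lambda(v-1)=12=r_{1}(k-1)-d_{1}$ giving $r_{1}=6$, $d_{1}=0<4=r_{1}-\lambda$, yet $M_{c}M_{c}^{T}$ is a $7\times 7$ matrix, so its rank is at most $7$, while the claimed bound is $\lceil 7\cdot 7/4\rceil=13$. The source theorem is a lower bound on the \emph{number of blocks} $b$ (for which the doubled Fano plane gives $14\geq 13$, consistent), and the paper's substitution of $\operatorname{rank}(M_{c}M_{c}^{T})$ for $b$ makes the statement false whenever $r_{1}>k$. Your own observation that the case $b\geq v$ only yields information about $b$ is precisely the symptom of this misstatement.

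Even if you restrict to your ``substantive'' case $b<v$ (equivalently $r_{1}<k$), where the inequality is at least not vacuously false, what you have is a plan rather than a proof. The decomposition $s=\operatorname{diag}(r_{x}-\lambda)+\lambda J+E$ with $E$ entrywise nonnegative, followed by interlacing on $\mathbf{1}^{\perp}$, cannot by itself produce the constant $(r_{1}+1)/(k+1)$: entrywise nonnegativity of a zero-diagonal perturbation is perfectly compatible with a large collapse of rank, and the only structural facts you have fed into the argument are the diagonal floor $r_{1}$ and the off-diagonal floor $\lambda$. You would at minimum need the row-sum identity $\sum_{y}(M_{c}M_{c}^{T})_{xy}=r_{x}k$ and a quantitative way for the hypothesis $d_{1}<r_{1}-\lambda$ to enter; as written, that hypothesis is only used to conclude $r_{1}>\lambda$, which is nowhere near enough. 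The known proof of the block-count version is an intricate combinatorial counting argument, not a spectral one, which is further reason to doubt the interlacing step can be completed as described. My recommendation is to first correct the statement to a bound on $b$, and only then attempt a proof; as a rank inequality it should be discarded.
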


\begin{theorem}\label{th888}
Let $v$, $k$, $\lambda$ be positive integers such that $3 \leq k <v$, and let $r_{2}$ and $d_{2}$ be the integers such that $\lambda(v-1)=r_{2}(k-1)+d_{2}$ and $0\leq d_{2}<k-1$. Let $b$ be the number of blocks in any $(v,k,\lambda)$-packing. If $d_{2}<r_{2}-\lambda$, then
\begin{equation}
b\leq\Big\lfloor\frac{v(r_{2}-1)}{k-1}\Big\rfloor.
\end{equation}
\end{theorem}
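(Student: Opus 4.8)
The plan is to pass from the blocks to the replication numbers and control how unevenly they can be distributed. For a point $x\in V$ let $r_x$ be the number of blocks of $\mathcal{B}$ through $x$, and for $x\neq y$ let $\lambda_{xy}\le\lambda$ be the number of blocks through both. First I would record the two basic counts: fixing $x$ and counting incident pairs gives $\sum_{y\neq x}\lambda_{xy}=r_x(k-1)$, so $r_x(k-1)\le\lambda(v-1)=r_2(k-1)+d_2$, and since $0\le d_2<k-1$ this forces the integer bound $r_x\le r_2$ for every $x$. Counting flags gives $\sum_{x\in V}r_x=bk$. Writing $S=\sum_{x\in V}(r_2-r_x)=vr_2-bk\ge 0$, the target $b(k-1)\le v(r_2-1)$ is, after clearing denominators, exactly equivalent to $S\ge v(k-r_2)/(k-1)$, so everything reduces to a lower bound on $S$.

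Two cases are then immediate. If $r_2\ge k$ then $bk\le vr_2$ already yields $b\le vr_2/k\le v(r_2-1)/(k-1)$, and if no point attains $r_x=r_2$ then $bk\le v(r_2-1)$ gives $b\le v(r_2-1)/k\le v(r_2-1)/(k-1)$; in both situations the floor bound follows since $b$ is an integer. So I may assume $r_2\le k-1$ and that $V_{\max}=\{x:r_x=r_2\}$ is nonempty, and argue by contradiction, assuming $b(k-1)\ge v(r_2-1)+1$. This assumption is equivalent to $S\le v-1-b$, and since each point of $U:=V\setminus V_{\max}$ contributes at least $1$ to $S$, one gets $|U|\le S\le v-1-b$, that is $|V_{\max}|\ge b+1$: there would be strictly more points of maximal replication than blocks.

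The heart of the argument is a capacity count on the \emph{leave} multigraph $L$ on $V$, in which the pair $\{x,y\}$ receives multiplicity $\lambda-\lambda_{xy}\in\{0,\dots,\lambda\}$. Here $\deg_L(x)=\sum_{y\neq x}(\lambda-\lambda_{xy})=(r_2-r_x)(k-1)+d_2$, so each point of $V_{\max}$ has leave-degree exactly $d_2$ while each point of $U$ has leave-degree at least $(k-1)+d_2$. Summing leave-degrees over $U$ and splitting the incident leave-edges into those inside $U$ (at most $\lambda\binom{|U|}{2}$, by the multiplicity cap) and those joining $U$ to $V_{\max}$ (at most the total leave-degree $|V_{\max}|\,d_2$ available on $V_{\max}$) yields
\[
(k-1)S+|U|d_2\;\le\;\lambda\,|U|(|U|-1)+|V_{\max}|\,d_2 .
\]
Using $S\ge|U|$, this forces $|U|$ to be \emph{large} whenever $U\neq\emptyset$ (of size on the order of $1+(k-1)/\lambda$), which I expect to contradict the bound $|U|\le v-1-b$ from the previous paragraph. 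It is precisely in closing this numerical gap, dominating the stray $d_2$-terms, that the hypothesis $d_2<r_2-\lambda$ is needed.

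The remaining and, I expect, most delicate case is $U=\varnothing$, a genuinely $r_2$-regular packing with $S=0$, where the leave graph is $d_2$-regular and the capacity count is vacuous. For $d_2=0$ regularity forces $\lambda_{xy}=\lambda$ for all pairs, so $(V,\mathcal{B})$ is an honest $2$-$(v,k,\lambda)$ design and Fisher's inequality gives $b\ge v$, hence $r_2=bk/v\ge k$, contradicting $r_2\le k-1$. For $d_2>0$ I would run the analogous Fisher-type argument on the Gram matrix $MM^{T}$, whose diagonal entries are the $r_x$ and whose off-diagonal entries are the $\lambda_{xy}$: using $d_2<r_2-\lambda$ to guarantee the relevant quadratic form is positive definite gives $\mathrm{rank}(MM^{T})=v$, whence the inequality $b\ge\mathrm{rank}(MM^{T})$ noted before this theorem again forces $b\ge v$ and $r_2\ge k$. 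Making this positive-definiteness step work uniformly in the nearly-regular regime is the part I anticipate being the main obstacle.
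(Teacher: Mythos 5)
First, a remark on the comparison itself: the paper does not prove Theorem~\ref{th888} --- it is quoted from the reference \cite{Covering} with no argument given --- so your proposal can only be judged on its own merits. Your opening reductions are correct and are surely how any proof must begin: $r_x\le r_2$ for every point, $bk=\sum_x r_x$, and the contradiction hypothesis $b(k-1)\ge v(r_2-1)+1$ forcing $|V_{\max}|\ge b+1$ (equivalently $b\le v-1$). The genuine gap is in what you call the heart of the argument, the capacity count on the leave multigraph over $U=V\setminus V_{\max}$. The inequality $(k-1)S+|U|d_2\le\lambda|U|(|U|-1)+|V_{\max}|d_2$ does \emph{not} force $|U|$ to be large when $d_2>0$: for $|U|=1$ it reads $k-1+d_2\le(v-2)d_2$, which is typically satisfied, because the term $|V_{\max}|d_2$ on the right absorbs everything for small $|U|$. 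Moreover, even a lower bound on $|U|$ of order $1+(k-1)/\lambda$ would not contradict $|U|\le v-1-b$, since $v-1-b$ can be close to $v$. Your final Fisher-type idea is the right one, but as you yourself note, you have not supplied the positive-definiteness step for $d_2>0$, and applying it to the full $v\times v$ Gram matrix $MM^{T}$ is in any case hopeless when some points are deficient (and circular in spirit, since $\mathrm{rank}(MM^{T})\le b$ always).

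Both problems dissolve if you restrict the Gram matrix to the full points rather than to all of $V$. Let $N$ be the $|V_{\max}|\times b$ submatrix of the incidence matrix whose rows are indexed by $V_{\max}$. Then $NN^{T}=(r_2-\lambda)I+\lambda J-L$, where $L$ is the restriction of your leave matrix to $V_{\max}$: it is symmetric, entrywise nonnegative, has zero diagonal, and each of its row sums is at most $\sum_{y\ne x}(\lambda-\lambda_{xy})=\lambda(v-1)-r_2(k-1)=d_2$. Hence its spectral radius satisfies $\rho(L)\le d_2<r_2-\lambda$, so $(r_2-\lambda)I-L\succ 0$; since $\lambda J\succeq 0$, the matrix $NN^{T}$ is positive definite and $\mathrm{rank}(N)=|V_{\max}|\ge b+1$. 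But $\mathrm{rank}(N)\le b$, a contradiction. This single argument uses the hypothesis $d_2<r_2-\lambda$ exactly once, handles the regular and irregular cases uniformly, and makes the leave-graph capacity count, the case split on $U=\varnothing$, and the ``nearly-regular regime'' you flag as the main obstacle all unnecessary. (Your side cases $r_2\ge k$ and $V_{\max}=\varnothing$ are correct but are also subsumed, since the contradiction hypothesis already forces $|V_{\max}|\ge b+1>0$, and the displayed argument nowhere needs $r_2\le k-1$.)
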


\begin{longtable}{|c|c|c|c|}
\caption{The bounds of the dimension of the codes from some known families of ADS's and adesigns.}\\
\hline
ADS ref & parameters & $d_{1}$  & lower bound \\
\hline
\cite{NOW} Theorem 45 & $(q,\frac{q-1}{2},\frac{q-5}{4},\frac{q-1}{2})$ & 1   & $q-1$  \\
\hline
 \cite{NOW} Theorem 45& $(q,\frac{q-1}{4},\frac{q-13}{16},\frac{q-1}{2}))$ & 2  & $q-7$  \\
\hline
 \cite{NOW} Theorem 45& $(q,\frac{q+3}{4},\frac{q-5}{16},\frac{q-1}{2})$ & 0  & $q-7$  \\
\hline
\cite{ACH} Theorem 22& $(4p,2p-1,p-2,p-1)$ & 3  & $4p-3$  \\
\hline
\cite{ACH} Theorem 22& $(4(2^{t}-1),2^{t+1}-3,2^{t}-3,2^{t}-2)$ & 3  & 2 \\
\hline
\cite{ACH} Theorem 22& $(4p(p+2),2p(p+2)-1,p(p+2)-2,p(p+2)-1)$ & 3  & $4p(p+2)-3$\\
\hline
\cite{DHM} Section III& $(2q,q-1,\frac{q-3}{2},\frac{3(q-1)}{2})$ & $\frac{3}{2}$  & $2q-1$ \\
\hline
\cite{DHM} Section III& $(2q,q,\frac{q-1}{2},\frac{3q-1}{2})$ & 0 & $2q$ \\
\hline
this paper, Remark \ref{re564}              &$(q-1,\frac{q-3}{2},\frac{q-7}{4})$ & 2  & $q-3$ \\
\hline
this paper, Theorem \ref{th999}$^{*}$               &$(q^{2}+q,q+1,1)$ & 1  & $q^{2}+q$ \\
\hline
\multicolumn{4}{l}{{\footnotesize * Apply \ref{th999} to a projective plane of order $q$.}}\\
\end{longtable}

\begin{theorem}\label{th22} Let $A$ be a $v \times v$ incidence matrix of the symmetric incidence structure $(G,\mathcal{B})$ obtained from the development of some $k$-subset $D$ in the Abelian group $G$ (where $\left|G\right|=v$) with difference levels $\mu_{1} <\cdots <\mu_{s}$. Suppose that $k \equiv\mu_{1}\equiv\cdots\equiv\mu_{s} ($mod $2)$. \begin{enumerate}
\item If $k$ is even the binary code of length $v$ with generator matrix $A$ is self-orthogonal.
\item If $k$ is odd the matrix \[ \left[ \begin{array}{cc}
1 & \\
\vdots & A \\
1 & \\
\end{array}\right] \] generates a binary self-orthogonal code of length $v+1$.
\end{enumerate}
\end{theorem}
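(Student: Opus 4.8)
The plan is to reduce the statement to a single matrix congruence modulo $2$. Recall that the binary code with generator matrix $M$ is self-orthogonal exactly when every pair of rows of $M$, and every row with itself, is orthogonal over $\mathbb{F}_{2}$; equivalently, when $MM^{T}\equiv 0\pmod 2$. So I would first record this reformulation and then compute $AA^{T}$ by quoting Lemma \ref{le1}, which applies since $(G,\mathcal{B})$ is exactly a symmetric structure coming from a development. Over the integers it gives
\[ AA^{T}=kI+\mu_{1}A_{1}+\cdots+\mu_{s-1}A_{s-1}+\mu_{s}\Big(J-\sum_{i=1}^{s-1}A_{i}-I\Big), \]
with each $A_{i}$ a $0/1$ matrix; the whole argument is then just the mod-$2$ reduction of this identity under the hypothesis $k\equiv\mu_{1}\equiv\cdots\equiv\mu_{s}\pmod 2$.

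For part (1), I would take $k$ (hence every $\mu_{i}$) even. Then each coefficient on the right-hand side is even, so every term vanishes modulo $2$ and $AA^{T}\equiv 0\pmod 2$. By the reformulation this says the length-$v$ row code of $A$ is self-orthogonal, finishing the even case.

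For part (2), I would take $k$ (hence every $\mu_{i}$) odd, so modulo $2$ every coefficient equals $1$. Reducing, the two copies of $I$ cancel, the two copies of each $A_{i}$ ($1\le i\le s-1$) cancel, and only the all-ones matrix survives, giving $AA^{T}\equiv J\pmod 2$. Thus distinct rows meet in inner product $1$ and, since $k$ is odd, each row has odd weight, so the row code itself is not self-orthogonal --- but the failure is completely uniform. Writing $A'$ for the matrix of the statement, obtained by adjoining a leading all-ones column to $A$, I note that this column contributes $J$ to the Gram matrix, so $A'(A')^{T}=J+AA^{T}\equiv J+J=0\pmod 2$. Hence the length-$(v+1)$ code generated by $A'$ is self-orthogonal, as claimed.

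Everything here is routine once Lemma \ref{le1} is available; the one place that needs care is the mod-$2$ bookkeeping of the term $\mu_{s}(J-\sum_{i}A_{i}-I)$ in the odd case, where one must check that the surviving contribution is exactly $J$ (rather than, say, $J+I$), since it is precisely this $J$ that the appended all-ones column is designed to cancel.
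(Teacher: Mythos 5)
Your proposal is correct and follows the same route as the paper: the paper's (one-line) proof likewise invokes Lemma \ref{le1} to see that all row weights and pairwise row inner products are even, which is exactly your computation $AA^{T}\equiv 0$ (respectively $A'(A')^{T}\equiv J+J\equiv 0$) modulo $2$. You have simply written out the mod-$2$ bookkeeping that the paper leaves implicit, and your care about the surviving $J$ term in the odd case is exactly the right point to check.
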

\begin{proof} By Lemma \ref{le1} we can see that, in both cases, the weights of the rows of the generator matrix are all even and the inner product of any two rows is even as well.
\end{proof}

 We will refer to an incidence structure $(V,\mathcal{B})$ whose incidence matrix generates a self-orthogonal code simply as {\it self-orthogonal}.
 \par
 We will use the following lemma.
 \begin{lemma}\label{le30} Let $(G,\mathcal{B})$ be a symmetric incidence structure coming from the development of a $k$-subset $D$ of the Abelian group $G$ (where $\left|G\right|=v$) with difference levels $\mu_{1}$ and $\mu_{2}$. Let $t$ denote the number of members of $G-\{0\}$ which appear $\mu_{1}$ times in the multiset $\{x-y \mid x,y \in D, x \neq y \}$. The the number of pairs of points in $G$ appearing in exactly $\mu_{1}$ blocks in $\mathcal{B}$ is $\frac{vt}{2}$ and the number of pairs of points of $V$ appearing in $\mu_{2}$ blocks is $\frac{v(v-1-t)}{2}$.
 \end{lemma}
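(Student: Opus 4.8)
The plan is to count, in two different ways, the total number of point-pairs of $G$ that appear in exactly $\mu_1$ blocks, using the fact that the structure $(G,\mathcal{B})$ is symmetric and arises from a development. First I would recall from Theorem \ref{th1} that $(G,\mathcal{B})$ has $2$-levels $\mu_1<\mu_2$, so every pair of distinct points of $G$ appears in exactly $\mu_1$ blocks or in exactly $\mu_2$ blocks; there is no third possibility. Hence the pairs split into two disjoint classes, and it suffices to count the $\mu_1$-class, since the $\mu_2$-class is its complement among all $\binom{v}{2}=\frac{v(v-1)}{2}$ pairs.

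The key step is to transfer the counting from pairs of points to single nonzero differences. By the proof of Theorem \ref{th1}, a pair $\{x,y\}$ with $x\neq y$ appears in exactly $\mu_j$ blocks precisely when the difference $d=x-y$ lies in the class $T_j$ (the set of elements of $G-\{0\}$ occurring $\mu_j$ times as a difference of distinct members of $D$). Thus the number of pairs appearing in $\mu_1$ blocks equals the number of ordered pairs $(x,y)$ with $x-y\in T_1$, divided by $2$ (since $\{x,y\}$ and $\{y,x\}$ give the same unordered pair). For each fixed nonzero difference $d\in T_1$ there are exactly $v$ ordered pairs $(x,y)\in G\times G$ with $x-y=d$, namely $(y+d,\,y)$ as $y$ ranges over $G$. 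Since $|T_1|=t$ by hypothesis, the number of ordered pairs with difference in $T_1$ is $vt$, and so the number of unordered pairs appearing in exactly $\mu_1$ blocks is $\frac{vt}{2}$.

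For the second assertion I would argue identically with $T_2$ in place of $T_1$: the $T_i$'s partition $G-\{0\}$, so $|T_2|=v-1-t$, and the same ordered-pair count gives $v(v-1-t)$ ordered pairs, hence $\frac{v(v-1-t)}{2}$ unordered pairs appearing in exactly $\mu_2$ blocks. As a consistency check, the two counts sum to $\frac{vt}{2}+\frac{v(v-1-t)}{2}=\frac{v(v-1)}{2}=\binom{v}{2}$, accounting for every pair exactly once. The only mild subtlety to be careful about is the passage from ordered to unordered pairs: I must confirm that $\{x,y\}$ and $\{y,x\}$ are genuinely counted once each in the ordered count and that no pair is fixed under swapping (which holds since $x\neq y$), so dividing by $2$ is clean. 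I expect this ordered-to-unordered bookkeeping to be the only place requiring attention; the rest follows directly from Theorem \ref{th1} and the fact that the $T_i$ partition $G-\{0\}$.
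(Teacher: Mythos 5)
Your proof is correct and follows essentially the same route as the paper: both arguments reduce the pair count to a count of $vt$ ordered pairs via the correspondence (from Theorem \ref{th1}) between a pair appearing in $\mu_j$ blocks and its difference lying in $T_j$, then divide by two. The only cosmetic difference is that the paper groups the ordered pairs by their first point ($t$ partners for each of the $v$ points) while you group them by the difference ($v$ realizations for each of the $t$ differences).
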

 \begin{proof} For each $x \in V$, there are $t$ points in $V-\{x\}$ each appearing together with $x$ in exactly $\mu_{1}$ blocks. Thus, there are $\frac{vt}{2}$ pairs of points of $V$ appearing in $\mu_{1}$ blocks. Similarly, there are $\frac{v(v-1-t)}{2}$ pairs of points of $V$ appearing in $\mu_{2}$ blocks. It is easily seen that $\frac{vt}{2}+\frac{v(v-1-t)}{2}=\binom{v}{2}$.
 \end{proof}

The following theorem relates these numbers to the dual minimum distance of the binary code generated by the incidence matrix of certain self-orthogonal incidence structures with two distinct $2$-levels.

\begin{theorem} Let $A$ be the incidence matrix of a self-orthogonal incidence structure $(G,\mathcal{B})$ coming from the development of a $k$-subset $D$ of the Abelian group $G$ (where $\left|G\right|=v$) with difference levels $\mu_{1}$ and $\mu_{2}$. Let $t$ denote the number of members of $G-\{0\}$ which appear $\mu_{1}$ times in the multiset $\{x-y \mid x,y \in D, x \neq y \}$. The dual of the binary code with generator matrix $A$ has minimum distance \[ d \geq \frac{(\mu_{2}+k)+\sqrt{(\mu_{2}+k)^{2}+4 \mu_{2}(\mu_{2}-\mu_{1})vt}}{2 \mu_{2}}.\]
\end{theorem}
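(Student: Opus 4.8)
The plan is to bound the minimum distance of the dual code by a standard counting argument on low-weight codewords. Let $A$ be the generator matrix as in the statement, and let $c$ be a nonzero codeword of the dual code $\mathcal{C}^{\perp}$ of weight $d$. By definition of the dual, $c$ is orthogonal (over $\mathbb{F}_{2}$) to every row of $A$, which means that each row of $A$ meets the support $S$ of $c$ (with $|S| = d$) in an even number of positions. Since each row of $A$ corresponds to a translate of $D$ and each column to a point of $G$, I would work with the $d$ columns of $A$ indexed by $S$ and count incidences among them.

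The key step is a double count of pairs of columns restricted to $S$. First I would note that each row of $A$ has weight $k$, so the total number of $1$'s in the full matrix is $vk$; restricting to the $d$ columns of $S$, the column sums (replication numbers) are all $k$ by symmetry of the development, giving $\sum_{\text{rows}} (\text{row}\cap S) = dk$. Because $\mathcal{C}^{\perp}$ forces each intersection $|{\rm row}\cap S|$ to be even and hence at least $2$ (it cannot be $0$ for all rows if the count is to work out, and odd values are excluded), I would set up an inequality comparing the first and second moments of the quantities $\ell_i := |{\rm row}_i \cap S|$. Concretely, $\sum_i \ell_i = dk$ while $\sum_i \binom{\ell_i}{2}$ counts ordered pairs of points of $S$ lying together in a common block, which by Lemma~\ref{le30} equals $\mu_{1}$ for $\frac{vt}{2}$-many unordered point-pairs and $\mu_{2}$ for the rest; summed over all $\binom{d}{2}$ pairs inside $S$ this is at most $\mu_{2}\binom{d}{2}$ and at least a corresponding lower expression involving both $\mu_1$ and $\mu_2$. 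Using the even-intersection constraint $\ell_i \ge 2$ together with convexity (or the Cauchy--Schwarz inequality $\sum \ell_i^2 \ge (\sum \ell_i)^2 / (\#\text{nonzero rows})$) yields a quadratic inequality in $d$.

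The remaining step is purely algebraic: I would collect the moment estimates into an inequality of the shape $\mu_{2} d^{2} - (\mu_{2}+k)d - \mu_{2}(\mu_{2}-\mu_{1})vt \le 0$ (the exact constants coming from Lemma~\ref{le30} and the replication count), then solve the quadratic to isolate $d$. The positive root of $\mu_{2} d^{2} - (\mu_{2}+k)d - \mu_{2}(\mu_{2}-\mu_{1})vt = 0$ is exactly
\[
d = \frac{(\mu_{2}+k) + \sqrt{(\mu_{2}+k)^{2} + 4\mu_{2}(\mu_{2}-\mu_{1})vt}}{2\mu_{2}},
\]
and since the quadratic opens upward, any weight $d$ of a codeword must lie at or above this root, which is the asserted bound.

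The main obstacle I anticipate is pinning down the precise bookkeeping in the second moment: I must be careful that the pair-count $\sum_i \binom{\ell_i}{2}$ is compared against the correct mix of $\mu_{1}$- and $\mu_{2}$-pairs supplied by Lemma~\ref{le30}, and that the even-intersection (self-orthogonality) hypothesis is used to replace each $\ell_i$ by a lower bound of $2$ without losing the quadratic structure. The sign of the $(\mu_{2}-\mu_{1})vt$ term and the direction of each inequality are where an error would most easily creep in, so I would track those terms explicitly rather than absorb them into constants prematurely.
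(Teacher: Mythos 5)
Your overall strategy is the same as the paper's: take the support $S$ (of size $d$) of a minimum-weight dual codeword, equivalently a minimal set of linearly dependent columns of $A$, use the orthogonality to force every row to meet $S$ in an even number of positions, and play the first moment $\sum_i\ell_i=kd$ against the second moment $\sum_i\ell_i(\ell_i-1)$ computed from the column inner products $\mu_1,\mu_2$ via Lemma~\ref{le30}. The execution, however, breaks at the last step. You arrive at $\mu_2d^2-(\mu_2+k)d-\mu_2(\mu_2-\mu_1)vt\le 0$ and conclude that, since the parabola opens upward, $d$ lies at or above the larger root. The opposite is true: an upward parabola is nonpositive exactly \emph{between} its roots, so your inequality would give an \emph{upper} bound on $d$ and proves nothing about the minimum distance. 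The inequality must point the other way, and the correct engine is termwise: each $\ell_i$ is a nonnegative even integer, so $\ell_i(\ell_i-2)\ge 0$ for every row (including rows with $\ell_i=0$; your parenthetical ``hence at least $2$'' is false, and Cauchy--Schwarz in the form you quote needs the number of nonzero rows, which you do not control). Summing gives $\sum_i\ell_i(\ell_i-1)\ge\sum_i\ell_i=kd$, and comparing with the evaluation of $\sum_i\ell_i(\ell_i-1)$ as $\mu_2d(d-1)-(\mu_2-\mu_1)vt$ yields $\mu_2d^2-(\mu_2+k)d-(\mu_2-\mu_1)vt\ge 0$, whose positive root bounds $d$ from below. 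Note also that your quadratic carries a spurious factor of $\mu_2$ on the constant term: the root you display is the root of the equation with constant term $(\mu_2-\mu_1)vt$, not $\mu_2(\mu_2-\mu_1)vt$, so your displayed formula is not the root of your displayed quadratic.

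A secondary point concerns the bookkeeping you yourself flag as the danger spot. The quantity $\sum_i\binom{\ell_i}{2}$ aggregates only over pairs of points \emph{inside} $S$, so the number of $\mu_1$-pairs that enter is the number of such pairs contained in $S$, which is at most, not necessarily equal to, the $vt/2$ counted by Lemma~\ref{le30}. Writing the second moment as exactly $\mu_2d(d-1)-(\mu_2-\mu_1)vt$ (as the paper does, and as your bound requires) presumes all of those pairs lie in $S$. Your hedge --- ``at most $\mu_2\binom{d}{2}$ and at least a corresponding lower expression'' --- never commits to which estimate, in which direction, actually produces the $4\mu_2(\mu_2-\mu_1)vt$ term under the radical; the safe upper bound $\mu_2\binom{d}{2}$ alone only yields the weaker conclusion $d\ge 1+k/\mu_2$. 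To reproduce the stated theorem you must fix the direction of the final inequality and adopt the paper's accounting of the $\mu_1$-pairs explicitly.
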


\begin{proof} Let $S$ be a minimal set of linearly dependent columns of $A$. Then every row of $A$ must intersect an even number of these columns in $1$s. Let $n_{i}$ denote the number of rows of $A$ intersecting exactly $i$ columns of $S$ in $1$s. Let $d=|S|$. Since every column of $A$ contains $k$ $1$s (because the incidence structure $(G,\mathcal{B})$ is symmetric) and the scalar product (over the reals) of any two columns is either $\mu_{1}$ or $\mu_{2}$, using Lemma \ref{le30} we have \[ \sum 2i n_{2i} = kd \] and \[ \sum 2i (2i-1) n_{2i} = \mu_{2} d (d-1) -  (\mu_{2} - \mu_{1}) vt. \] Subtracting the first equation from the second we have \[ \sum 2i (2i-2) n_{2i}=d((d-1)\mu_{2}-k)-(\mu_{2}-\mu_{1})vt\geq0.\] On one hand we get that $d((d-1)\mu_{2}-k)\geq(\mu_{2}-\mu_{1})vt\geq0$ and on the other hand we get that $d^{2}\mu_{2}-d(\mu_{2}+k)-(\mu_{2}-\mu_{1})vt\geq0$. The result follows from solving the quadratic.
\end{proof}

\section{Concluding Remarks}\label{sec6}
We have investigated some generalizations of combinatorial designs arising from almost difference sets, especially the $t$-adesigns. We have discussed some of their basic properties as well as given constructions, some of which correspond to new almost difference sets, and some of which correspond to new almost difference families. Interestingly, we have carried out many searches using cyclotomic cosets of many different orders and the three families constructed in Section \ref{sec3} were the only ones we could find which, after adjoining a point to certain blocks, produce $t$-adesigns.

\bibliographystyle{plain}
\bibliography{myref2}

\end{document}